\def\red{\color{black}}
\def\blue{\color{black}}
\newcounter{rot}
\def\a{\alpha} \def\b{\beta}  
\def\e{\varepsilon} \def\f{\phi}   
\def\G{\Gamma}  
     \def\l{\lambda}
 \def\om{\omega}  \def\U{\Upsilon}
\newtheorem{theorem}{Theorem}
\newtheorem{lemma}[theorem]{Lemma}
\newtheorem{corollary}[theorem]{Corollary}
\newcommand{\hW}{\widehat{W}}
\newcommand{\hC}{\widehat{C}}
\def\Pr{\mbox{{\bf Pr}}}
\newcommand{\ignore}[1]{}
\newcommand{\cA}{{\cal A}}
\newcommand{\beq}[2]{\begin{equation}\label{#1}#2\end{equation}}
\newcommand{\mult}[2]{\begin{multline}\label{#1}#2\end{multline}}
\newcommand{\dd}{\mathrm{d}}
\newcommand{\E}{\mathbb{E}}
\newcommand{\1}{\textbf{1}}
\newcommand{\p}[1]{\mathbb{P}\left( #1 \right)}
\newcommand{\set}[1]{\left\{#1\right\}}
\newcommand{\brac}[1]{\left(#1\right)}
\newcommand{\bfrac}[2]{\left(\frac{#1}{#2}\right)}
\title{\vspace{-3em}A randomly weighted minimum arborescence with a random cost constraint}
\author{Alan Frieze\thanks{Research supported in part by NSF grant DMS1661063}
\ and 
Tomasz Tkocz\\
Carnegie Mellon University\\Pittsburgh PA15213\\U.S.A.
}
\date{}
\def\l{\lambda}
\begin{document}

\maketitle

\begin{abstract}
We study the minimum spanning arborescence problem on the complete digraph $\vec{K}_n$ where an edge $e$ has a weight $W_e$ and a cost $C_e$, each of which is an independent uniform random variable $U^s$ where $0<s\leq 1$ and $U$ is uniform $[0,1]$. There is also a constraint that the spanning arborescence $T$ must satisfy $C(T)\leq c_0$. We establish, for a range of values for $c_0,s$, the asymptotic value of the optimum weight via the consideration of a dual problem. 
\end{abstract}

\bigskip

\begin{footnotesize}
\noindent {\em 2010 Mathematics Subject Classification.} 05C80, 90C27.

\noindent {\em Key words.} Random Minimum Spanning Arborescence, Cost Constraint.
\end{footnotesize}
\section{Introduction}
Let $U$ denoe the uniform $[0,1]$ random variable and let $0<s\leq 1$. We consider the minimum spanning arborescence problem in the context of the complete digraph $\vec{K}_n$ where each edge has an independent copy of $U^s$ for weight $W_e$ and an independent copy of $U^s$ for cost $C_e$. Let $\cA$ denote the set of spanning arborescences of $\vec{K}_n$. An arborescence is a rooted tree in which every edge is directed away from the root. The weight of a spanning arborescence $A$ is given by $W(A)=\sum_{e\in A}W_e$ and its cost $C(A)$ is given by $C(A)=\sum_{e\in A}C_e$. The problem we study is
\beq{prob}{
\text{Minimise }W(A)\text{ subject to }A\in\cA,\,C(A)\leq c_0,
}
where $c_0$ may depend on $n$. 

Without the constraint $C(A)\leq c_0$, we have a weighted matroid intersection problem and as such it is solvable in polynomial time, see for example Lawler \cite{Law}. Furthermore Edmonds \cite{Ed} gave a particularly elegant algorithm for solving this problem. With the constraint $C(A)\leq c_0$, the problem becomes NP-hard, since the knapsack problem can be easily reduced to it. On the other hand, equation \eqref{prob} defines a natural problem that has been considered in the literature, in the worst-case rather than the average case. See for example Guignard and Rosenwein \cite{GR} and Aggarwal, Aneja and Nair \cite{AAN} and Goemans and Ravi \cite{GRa} (for an undirected version). This paper is a follow up to the analysis of the cost constrained minimum weight spanning tree problem considered in \cite{FT}.

The addition of a cost contraint makes the problem NP-hard and reflects the fact that in many practical situations there may be more than one objective for an optimization problem. Here the goal is to lower weight and cost. We first consider the easier case where $s=1$.

{\red We use the following notation. For two real sequences $A_n$ and $B_n$ we say that $A_n\approx B_n$ if there exists a function $\e=\e(n) \to 0$ as $n \to \infty$ such that for every $n$
\[
(1-\e)A_n\leq B_n\leq (1+\e)A_n.
\]

}

\begin{theorem}\label{thm:main-res}
Let $D_n$ be the complete digraph $\vec{K}_n$ on $n$ vertices with each edge $e$ having assigned a random weight $W_e$ and a random cost $C_e$, where $\{W_e, C_e\}$ is a family of i.i.d. random variables uniform on $[0,1]$. Given $c_0 > 0$, let $W^*_{arb}$ be the optimum value for the problem \eqref{prob}. The following hold w.h.p.
\begin{enumerate}[{\bf C{a}se 1:}]
\item 
{\red If $c_0 \in [\om,\frac{n}{\om}]$ where $\om\to\infty$ then}
\beq{optb}{
W^*_{arb} \approx \frac{\pi n}{8 c_0}.
}
\item Suppose now that $c_0 = \alpha n$, where $\a=O(1)$ is a positive constant. 
\begin{enumerate}[(i)]
\item If $\a>1/2$ then 
\[
W^*_{arb}\approx 1.
\]
\item If $\a< 1/2$ then
\[
W^*_{arb}\approx f(\b^*)-\a\b^*
\] 
where $\b^*$ is the unique positive solution to $f'(\b)=\a$ and where
 \[
f(\b)=\b^{1/2}\int_{t=0}^{\b^{1/2}}e^{-t^2/2}\dd t +e^{-\b/2}, \qquad \beta > 0.
\]
\end{enumerate}
\item Suppose now that $c_0 = \alpha$, where $\a=O(1)$ is a positive constant. 
\begin{enumerate}[(i)]
\item If $\a<1$ then there is no solution to \eqref{prob}.
\item If $\a>1$ then 
\[
W^*_{arb}\approx (g(\b)-\a\b)n
\] 
where $\b^*$ is the unique positive solution to $g'(\b)=\a$ and where
 \begin{align*}
g(\b)&=\b^{1/2}\int_{t=0}^{\b^{-1/2}}e^{-t^2/2}\dd t +\b e^{-1/2\b}\\
&=\b f(1/\b), \qquad\qquad\qquad\qquad \beta > 0.
\end{align*}
\end{enumerate}
\end{enumerate}
\end{theorem}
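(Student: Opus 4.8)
The plan is to analyze the problem through Lagrangian duality, exactly as signposted in the abstract. For a multiplier $\b\geq 0$ define the relaxed weights $W_e^{(\b)} = W_e + \b C_e$ and consider the unconstrained minimum spanning arborescence problem with these weights; call its optimum value $M(\b)$. Then for every feasible $A$ with $C(A)\leq c_0$ we have $W(A)\geq M(\b) - \b c_0$, so $W^*_{arb}\geq \sup_{\b\geq 0}\big(M(\b)-\b c_0\big)$. The heart of the argument is to show this bound is asymptotically tight: for the optimal (or near-optimal) $\b^*$, the minimum arborescence under weights $W^{(\b)}$ has cost $\approx c_0$ and weight $\approx M(\b^*)-\b^* c_0$, w.h.p. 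This requires (a) a concentration/limit statement for $M(\b)$ and for the induced cost $C$ of the $W^{(\b)}$-optimal arborescence as functions of $\b$, and (b) an argument that one can tune $\b$ to hit the constraint $C\approx c_0$ and that the resulting arborescence is (asymptotically) feasible and optimal. Step (b) is essentially a continuity/monotonicity argument once (a) is in hand.

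For step (a) I would lean on the known theory for the random minimum spanning arborescence on $\vec K_n$: the analogue of Frieze's $\zeta(3)$ result for arborescences. Under i.i.d. $U$ edge weights, the minimum spanning arborescence has expected weight tending to a constant, and more relevantly there is a tractable description via the greedy/Edmonds branching algorithm or via the "each vertex picks its cheapest in-edge" heuristic, which is asymptotically optimal here. When edge "weights" are $W_e+\b C_e$ with $W_e,C_e$ i.i.d. uniform, the in-edge weights at a fixed vertex are i.i.d. copies of the sum $U+\b U'$, whose density near $0$ behaves like that of a convolution; the minimum of $n-1$ such is of order $n^{-1/2}$ when $\b>0$ (since the density of $U+\b U'$ at $0$ is $0$ and grows linearly), versus order $n^{-1}$ when $\b=0$. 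Tracking the joint law of $(W_e,C_e)$ along the chosen edges — i.e. conditioning on $W_e+\b C_e$ being the minimum of many samples and reading off the conditional expectations of $W_e$ and $C_e$ separately — yields the functions $f,g$ in the statement: a short computation with the half-normal-type integral $\int_0^{\cdot} e^{-t^2/2}\,dt$ produces exactly $f(\b)=\b^{1/2}\int_0^{\b^{1/2}}e^{-t^2/2}\,dt+e^{-\b/2}$ as the limiting normalized $W$-value and $f'(\b)$ as the limiting normalized $C$-value, explaining why $\b^*$ solves $f'(\b^*)=\a$. The three cases then correspond to the regime of $c_0$: when $c_0=\Theta(n)$ the relevant scaling gives the $f$-formulas (Case 2), when $c_0=\Theta(1)$ one rescales $\b\mapsto 1/\b$ and the extra factor $n$ appears, giving $g(\b)=\b f(1/\b)$ (Case 3, with infeasibility when $\a<1$ because even the all-cheapest-in-edge arborescence has cost concentrated near $1\cdot n$... wait — here $c_0=\a$ is constant, and any arborescence has $n-1$ edges each of cost $\ge 0$, with total cost concentrated, so $\a<1$ forces infeasibility), and the two extreme regimes $c_0\in[\om,n/\om]$ and $\a>1/2$ degenerate to the boundary behaviors $\pi n/(8c_0)$ and $1$ respectively (the constant $1$ being the unconstrained minimum arborescence weight, and $\pi/8 = \tfrac12\int_0^\infty te^{-t^2/2}\,dt \cdot(\text{normalization})$ coming from the $\b\to$ limiting trade-off).

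Concretely, the steps in order: (1) set up the Lagrangian lower bound $W^*_{arb}\geq \sup_\b (M(\b)-\b c_0)$; (2) establish, for each fixed $\b>0$ (and for $\b\to 0$ or $\b\to\infty$ at controlled rates), that the $W^{(\b)}$-minimum arborescence has weight and cost concentrated around explicit constants times the appropriate power of $n$, via analysis of Edmonds' algorithm or the cheapest-in-edge heuristic together with a proof that the heuristic is asymptotically optimal in this weight regime; (3) differentiate the weight-vs-$\b$ relation to identify that the induced cost equals (a constant times) the derivative, hence is continuous and monotone in $\b$, so there is $\b^*$ with induced cost $\approx c_0$; (4) for that $\b^*$, show the arborescence obtained is w.h.p. feasible (cost $\leq c_0(1+o(1))$, then patch the $o(1)$ slack by a negligible local modification) and has weight matching the lower bound, giving $W^*_{arb}\approx M(\b^*)-\b^* c_0$; (5) specialize the resulting formula to the three cases and the sub-cases, checking the boundary regimes separately. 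The main obstacle I expect is step (4) together with the optimality of the heuristic in step (2): proving that the cheapest-in-edge (or Edmonds-branching) arborescence under the perturbed weights is within $1+o(1)$ of optimal, and that no arborescence can beat the Lagrangian bound — i.e. closing the duality gap — since in general there is an integrality gap for this NP-hard problem and its vanishing w.h.p. is the crux. This will likely require a careful second-moment or coupling argument showing the optimal integral solution and the Lagrangian-optimal fractional/relaxed solution coincide asymptotically, analogous to but more delicate than the spanning-tree case in \cite{FT}.
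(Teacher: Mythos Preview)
Your Lagrangian framework is exactly right and matches the paper's strategy. The gap is in how you propose to analyze $M(\b)$ and close the duality gap. You repeatedly refer to the ``cheapest-in-edge heuristic'' as if it produces an arborescence; it does not. Letting each vertex choose its cheapest edge yields a \emph{functional digraph} (a union of in-trees hanging off directed cycles), not a spanning arborescence. The paper makes this the central device rather than a heuristic: it replaces the constrained arborescence problem by the constrained \emph{mapping} problem $\min_f\sum_i(W_{i,f(i)}+\l C_{i,f(i)})$, which decomposes into $n$ independent one-vertex minimizations. This is what makes $\E\phi_{map}(\l)$ computable (your $f$ and $g$ indeed fall out of $\E\min_{j}\{W_{ij}+\l C_{ij}\}$) and what makes concentration a routine Chernoff argument over an i.i.d.\ sum. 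Trying to analyze the actual minimum arborescence $M(\b)$ directly via Edmonds' algorithm would be substantially harder and is not what the paper does.

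The second missing idea is the conversion step. Once the LP/dual for the mapping problem is solved, the paper observes that the optimal mapping $f^*$ is, by symmetry, a \emph{uniformly random} mapping (their Lemma~\ref{lm:opt-is-unif}), and a uniform random mapping has $O(\log n)$ cycles w.h.p.\ (Lemma~\ref{lm:cyc}). Breaking each cycle with one cheap edge into the largest component turns $D_{f^*}$ into an arborescence at negligible additional weight and cost; this is how the duality gap is closed, not by a second-moment argument on arborescences. Your step (4) ``patch the $o(1)$ slack by a negligible local modification'' is in the right spirit but you need the random-mapping cycle count to make it work. Finally, your infeasibility reasoning in Case~3 is garbled: the minimum possible cost of any mapping is $\sum_v\min_w C_{(v,w)}$, a sum of $n$ i.i.d.\ variables each of mean $\approx 1/n$, so the total is concentrated near $1$, not near $n$; hence $\a<1$ is infeasible.
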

{\red Some simple observations: if $\a>1/2$ in Case 2 then w.h.p. cost of the arborescence of minimum total weight satisfies the cost contraint and we will see that $W^*_{arb}\approx 1$ in this case. If $\a<1$ in Case 3 then the problem is infeasible. The expected minimum weight of an edge leaving a fixed vertex is $1/n$ and summing these gives a lower bound on the minimum weight of an arborescence. 

It is also instructive to examine Case 2 when $\a\to0$ and Case 3 when $\a\to\infty$ to see if we ``recover'' Case 1. First consider Case 2. Lemma \ref{f(b)} below shows that $f'(\b)$ decreases monotonically to zero which means that $\b^*\to\infty$ as $\a\to 0$. The lemma also shows that for large $\b$ we have $\a=f'(\b)\approx (\pi/8\b)^{1/2}$. We have $f(\b)\approx (\pi\b/2)^{1/2}$ and so $f(\b)-\a\b\approx (\pi\b/8)^{1/2}\approx (\pi/8\a)$, and this is consistent with Case 1. For Case 3, Lemma \ref{g(b)} implies that $\b\to0$ as $\a\to\infty$. In which case, $\a=g'(\b)\approx (\pi/8\b)^{1/2}$ and $g(\b)\approx (\pi\b/2)^{1/2}$ and we again get an expression consistent with Case 1.
}

We note that Lemma \ref{f(b)} of Section \ref{fg} shows that the claims in Case 2 are reasonable and Lemma \ref{g(b)} shows that the claims in Case 3 are reasonable (that is, the stated equations possses unique solutions).

For the case $s<1$ we will prove the following.
\begin{theorem}\label{thm:main-resx}
Let $D_n$ be the complete digraph $\vec{K}_n$ on $n$ vertices with each edge $e$ being assigned a random weight $W_e$ and a random cost $C_e$, where $\{W_e, C_e\}$ is a family of i.i.d. random variables $U^s$. Given 
\beq{c0range}{
n^{1-s}\log n\ll c_0\ll \frac{n}{\log n},
}
let $W^*_{arb}$ be the optimum value for the problem \eqref{prob}. The following holds w.h.p.
\beq{opta}{
W^*_{arb}\approx \frac{C_s^2n^{2-s}}{4c_0},
}
where 
\[
C_s={\blue \G(s/2+1)}\bfrac{\G(2/s+1)}{\G(1/s+1)^2}^{s/2}.
\]
\end{theorem}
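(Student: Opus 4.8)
The plan is to attack \eqref{prob} through its Lagrangian dual. For $\lambda\ge 0$ write $X_e^{(\lambda)}=W_e+\lambda C_e$ and let $\mathrm{MSA}(\lambda)$ be the weight of a minimum spanning arborescence of $\vec K_n$ under the edge weights $X_e^{(\lambda)}$. For any $A\in\cA$ with $C(A)\le c_0$ and any $\lambda\ge 0$ one has
\[
W(A)=\sum_{e\in A}X_e^{(\lambda)}-\lambda C(A)\ \ge\ \mathrm{MSA}(\lambda)-\lambda c_0 ,
\]
so $W^*_{arb}\ge \max_{\lambda\ge 0}\big(\mathrm{MSA}(\lambda)-\lambda c_0\big)$; we will then show that a minimum arborescence for a suitably tuned $\lambda$ is essentially feasible and has weight matching this bound. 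The first step is to compute the small-value law of $X_e^{(\lambda)}$: since $W_e$ has density $\tfrac1s y^{1/s-1}$ on $[0,1]$ and $\lambda C_e$ has density $\tfrac1{s\lambda^{1/s}}z^{1/s-1}$ on $[0,\lambda]$, a Dirichlet-integral evaluation gives, exactly for $t\le \min(1,\lambda)$,
\[
\p{X_e^{(\lambda)}\le t}=K_\lambda t^{2/s},\qquad K_\lambda=\frac{\Gamma(1/s+1)^2}{\Gamma(2/s+1)}\,\lambda^{-1/s} ,
\]
so in the relevant window $X_e^{(\lambda)}$ follows a pure power law with exponent $2/s$.

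For the lower bound on $W^*_{arb}$ it is enough to bound $\mathrm{MSA}(\lambda)$ from below. Each arborescence gives every non-root vertex $v$ an incoming edge, and the in-star minima $m_v:=\min_u X^{(\lambda)}_{uv}$ involve pairwise disjoint edge sets, hence are independent, so $\mathrm{MSA}(\lambda)\ge \sum_v m_v-\max_v m_v$. From the power law, $\E\,m_v=\ooi\,\Gamma(s/2+1)(nK_\lambda)^{-s/2}$ (the integral $\int_0^\infty e^{-nK_\lambda t^{2/s}}\dd t$; the truncation at $\min(1,\lambda)$ is harmless because, for $c_0$ as in \eqref{c0range}, the integral is supported at scale $(nK_\lambda)^{-s/2}=\Theta\big(n^{-s/2}\lambda^{1/2}\big)=o(\min(1,\lambda))$), and a second-moment estimate for the independent sum yields $\sum_v m_v=\ooi\,C_s n^{1-s/2}\lambda^{1/2}$ w.h.p., while $\max_v m_v=O\big((\log n)^{s/2}(nK_\lambda)^{-s/2}\big)$ is a polylogarithmic factor below $n^{-1}\sum_v m_v$. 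Substituting $\lambda^*=\big(C_s n^{1-s/2}/(2c_0)\big)^2$ (which lies in $\big((\log n)^2 n^{-s},\,n^s(\log n)^{-2}\big)$ exactly when \eqref{c0range} holds) into the dual bound gives $W^*_{arb}\ge \ooi\,C_s n^{1-s/2}\sqrt{\lambda^*}-\lambda^* c_0=\ooi\,\tfrac{C_s^2 n^{2-s}}{4c_0}$.

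For the upper bound the plan is to exhibit a cheap feasible arborescence. Fix $\lambda$ and let $D_0$ consist, for every vertex $v$, of the in-edge attaining $m_v$; reversing the edges of $D_0$ yields a uniformly random fixed-point-free mapping of $[n]$, so w.h.p. $D_0$ has $O(\log n)$ weak components and cyclic vertices totalling $O(\sqrt n)$. Deleting one edge per cycle and reconnecting the resulting in-forests using cheapest available inter-component edges (each a minimum over $\Omega(n^2/\mathrm{polylog})$ edges, so contributing $o(n^{1-s/2}\lambda^{1/2})$ altogether) produces a spanning arborescence $A=A(\lambda)$ with $\sum_{e\in A}X^{(\lambda)}_e\le \sum_v m_v+o(n^{1-s/2}\lambda^{1/2})=\ooi\,C_s n^{1-s/2}\lambda^{1/2}$. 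To split this sum into its weight and cost contributions, condition on $\cF=\sigma(\{X^{(\lambda)}_e:e\in E\})$: then $A$ is $\cF$-measurable (it depends on the $X$-values alone), and the fractions $a_e:=W_e/X^{(\lambda)}_e$ are conditionally independent with $a_e\mid\cF\sim\mathrm{Beta}(1/s,1/s)$ whenever $X_e^{(\lambda)}\le\min(1,\lambda)$, which holds simultaneously for all $e\in A$ w.h.p. in the range \eqref{c0range}. Since $\E\,\mathrm{Beta}(1/s,1/s)=\tfrac12$ and $\sum_{e\in A}(X^{(\lambda)}_e)^2=o\big(\big(\sum_{e\in A}X^{(\lambda)}_e\big)^2\big)$ w.h.p., conditional concentration gives $W(A)=\ooi\,\tfrac12 C_s n^{1-s/2}\lambda^{1/2}$ and $C(A)=\ooi\,\tfrac{C_s n^{1-s/2}}{2\lambda^{1/2}}$. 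Finally, choosing $\lambda$ with $\tfrac{C_s n^{1-s/2}}{2\lambda^{1/2}}=c_0/(1+\eta)$ for some $\eta=\eta(n)\downarrow 0$ decaying more slowly than the preceding error terms makes $A(\lambda)$ feasible w.h.p. with $W(A(\lambda))=\ooi\,\tfrac{C_s^2 n^{2-s}}{4c_0}$, which together with the dual lower bound proves \eqref{opta}.

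The hard part will be the arborescence construction in the last paragraph: one must verify that the cheapest-in-edge digraph can be repaired into a spanning arborescence at negligible cost, which requires the random-mapping cycle statistics together with the fact that every edge used — in particular the reconnecting ones — stays inside the exact power-law window $[0,\min(1,\lambda)]$, where alone the Beta-split conditioning is legitimate. The accompanying bookkeeping — making every $\ooi$ error genuinely $o(1)$ uniformly over the admissible $\lambda$ — is precisely what pins down the two endpoints in \eqref{c0range}: each is equivalent to the typical weight $\Theta\big(n^{-s/2}\lambda^{1/2}\big)=\Theta(n^{1-s}/c_0)$ of a used edge being $o(\min(1,\lambda))$, so that the law of $X_e^{(\lambda)}$ may be replaced by $K_\lambda t^{2/s}$ throughout.
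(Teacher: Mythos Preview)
Your plan is sound and diverges from the paper's in one key place: how you recover the separate weight $W(A)$ and cost $C(A)$ from the combined Lagrangian value $\sum_{e\in A}X_e^{(\lambda)}$. The paper routes this through the LP relaxation of the constrained \emph{mapping} problem: the optimal basic feasible solution to $LP_{map}$ automatically satisfies the cost constraint, and rounding its single fractional vertex yields a mapping $f^*$ with $C(f^*)\le c_0+C_{\max}$ and $W(f^*)\le W^*_{LP}+W_{\max}$; feasibility is thus obtained structurally, without ever decomposing $\sum X_e$. Your $\mathrm{Beta}(1/s,1/s)$ conditioning is a genuinely different device: it exploits the exact factorisation of $(W_e/X_e^{(\lambda)},X_e^{(\lambda)})$ in the power-law window, so the split between weight and cost along the constructed arborescence becomes a conditional law of large numbers. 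This sidesteps the LP machinery and, since you only evaluate at two deterministic values of $\lambda$, also removes the paper's discretisation of $\lambda$ and the associated union bound (Chebyshev-level concentration suffices). The price is that your argument is tied to $W_e,C_e$ being identically distributed as $U^s$; the paper's LP route would survive distinct laws for weight and cost, whereas the Beta split would not.

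Two small points to tighten. First, each reconnecting edge is a minimum over $\Omega(n/\log n)$ candidates (in-edges to a fixed root from the large component), not $\Omega(n^2/\mathrm{polylog})$; the conclusion is unaffected. Second, those candidate edges are not independent of $D_0$ --- its construction already reveals that none of them is the argmin for its head --- so you will need the standard trick (as in Karp--Steele, which the paper also invokes) of treating edges with $X_e$ above a small threshold as fresh samples. This is precisely the ``hard part'' you flag, and it is resolved the same way in both approaches.
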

Note that $\G(3/2)=\sqrt{\pi}/2$ and this implies that $C_1=\sqrt{\pi/2}$ and the expression in \eqref{opta} is consistent with the expression in \eqref{optb}.

We will first concentrate on the case $s=1$. After this, we will continue with the proof of Theorem \ref{thm:main-resx}. We note that a preliminary version containing the results for the case $s=1$ appeared in \cite{SODA}. The weights and costs will therefore be uniform $[0,1]$ until we reach the more general case in Section \ref{a<1}. We will then prove Theorem \ref{thm:main-resx} as stated and then show how to extend this result to a wider class of distribution via a simple coupling argument from Janson \cite{Jan}.

{\red
\section{Outline of the argument}
We first prove an equivalent result where we replace spanning arborescence by a surrogate, a functional digraph. Given $f:[n]\to[n]$ we let the associated digraph $D_f$ be $([n],A_f)$ where $A_f=\set{(v,f(v)):v\in [n]}$. We will estimate the minimum weight of the set of edges in a $D_f$ that satisfies $C(A_f)\leq c_0$. Computing this minimum is a 0-1 integer program $IP_{map}$ and we estimate the value of the LP relaxation $LP_{map}$ and show that w.h.p. $IP_{map}$ and $LP_{map}$ have asymptotically equal minimum objective values. A random function $f$ gives rise to a $D_f$ that is close enough to being an arborescence that we can translate results for the optimal $f$ to results for the optimal arborescence.

We get our estimate for the optimum objective value in $LP_{map}$ by using Lagrangean Relaxation. We used the same approach in \cite{FT} for the constrained spanning tree problem. The main difference there is that in the case of spannning trees we can estimate the dual value directly via an integral formula. We do not need a surrogate as we did here. So the philosophy is the same, but the details differ substantially. 

Lagrangean Relaxation introduces a dual function $\f(\l)$, where $\l$ is the ``Lagrange Multiplier''. We have to maximise $\f$ and while it is straightforward to estimate $\E\f$, to maximise $\f$ we need concentration around the mean. This is the subject of Section \ref{concsec}. Rather than attempt a union bound over all real $\l$, we discretize the set of values for $\l$ and show we have sufficient concentration to get a very good estimate for the maximum. This is done in Section \ref{discrete}. The optimum solution to $LP_{map}$ is almost a mapping and it needs to be converted to an actual mapping.

So, the proof goes: 
\begin{enumerate}
\item Define the dual problem for finding a minimum weight functional digraph and get an expression for $\E(\f(\l))$.
\item Prove concentration of $\f$ around its mean.
\item Discretize and apply concentration and find the maximum value of $\f$.
\item Convert the optimum LP solution to a {\em random mapping} that is close in weight and cost to the optimum LP solution.
\item Argue that the duality gap for the functional digraph problem is small.
\item Transfer the result on functional digraphs to spanning arborescences.
\end{enumerate}
}
\section{Auxiliary results}

\subsection{Properties of the functions $f$ and $g$}\label{fg}
\begin{lemma}\label{f(b)}
$f(0)=1, f(\infty)=\infty$, $f'(0)=1/2, f'(\infty) = 0$ and $f'$ is strictly monotone decreasing. These imply that $f' > 0$, $f$ is concave increasing and for every $0 < \alpha < \frac{1}{2}$, there is a unique $\beta > 0$ such that $f'(\beta) = \alpha$.
\end{lemma}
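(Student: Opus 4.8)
The plan is to verify each claimed property of $f(\b)=\b^{1/2}\int_0^{\b^{1/2}}e^{-t^2/2}\dd t+e^{-\b/2}$ by direct computation, starting with the boundary values and then differentiating. First I would compute $f(0)$: the integral vanishes at $\b=0$ and $e^{-\b/2}\to 1$, so $f(0)=1$. For $f(\infty)$, note $\int_0^{\b^{1/2}}e^{-t^2/2}\dd t\to\sqrt{\pi/2}$ as $\b\to\infty$, so $f(\b)\sim\sqrt{\pi\b/2}\to\infty$, while $e^{-\b/2}\to 0$.

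Next I would compute $f'$. Writing $F(\b)=\int_0^{\b^{1/2}}e^{-t^2/2}\dd t$, the chain rule gives $F'(\b)=e^{-\b/2}\cdot\frac{1}{2}\b^{-1/2}$. Then
\[
f'(\b)=\frac{1}{2}\b^{-1/2}F(\b)+\b^{1/2}\cdot\frac{1}{2}\b^{-1/2}e^{-\b/2}-\frac{1}{2}e^{-\b/2}=\frac{1}{2}\b^{-1/2}F(\b)=\frac{1}{2\sqrt{\b}}\int_0^{\sqrt\b}e^{-t^2/2}\dd t,
\]
after the two $e^{-\b/2}$ terms cancel. This clean closed form is the crux: from it everything follows. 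As $\b\to 0^+$, $\frac{1}{\sqrt\b}\int_0^{\sqrt\b}e^{-t^2/2}\dd t\to 1$ (since the integrand is $1$ at $0$), so $f'(0)=1/2$; as $\b\to\infty$, the integral is bounded by $\sqrt{\pi/2}$ so $f'(\b)=O(\b^{-1/2})\to 0$. Positivity of $f'$ is immediate since the integrand is positive, which gives $f$ strictly increasing; concavity will follow once I show $f'$ is strictly decreasing.

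For strict monotonicity of $f'$, I would substitute $t=\sqrt\b\,u$ to get $f'(\b)=\frac{1}{2}\int_0^1 e^{-\b u^2/2}\dd u$, and now differentiating under the integral sign gives $f''(\b)=-\frac{1}{4}\int_0^1 u^2 e^{-\b u^2/2}\dd u<0$ for all $\b>0$. Hence $f'$ is strictly decreasing on $(0,\infty)$, which also confirms $f$ is concave. Finally, since $f'$ is continuous, strictly decreasing, with $f'(0)=1/2$ and $f'(\infty)=0$, the intermediate value theorem yields that for every $\a\in(0,1/2)$ there is a unique $\b>0$ with $f'(\b)=\a$.

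I do not anticipate a genuine obstacle here; the only mild subtlety is justifying differentiation under the integral sign and the limit $\lim_{\b\to0^+}\b^{-1/2}\int_0^{\sqrt\b}e^{-t^2/2}\dd t=1$, both of which are routine (dominated convergence, or the substitution $t=\sqrt\b\,u$ which makes the integrand uniformly bounded and lets one pass to the limit directly). The one computational point to be careful about is the cancellation of the two $e^{-\b/2}$ contributions in $f'$, since the whole argument hinges on the resulting simple formula for $f'$.
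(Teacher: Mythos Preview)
Your proof is correct and follows essentially the same approach as the paper: both compute $f'(\b)=\frac{1}{2\b^{1/2}}\int_0^{\b^{1/2}}e^{-t^2/2}\dd t$ and then verify $f''<0$. The only minor difference is that the paper differentiates this expression directly to obtain $f''(\b)=\frac{1}{4\b^{3/2}}\int_0^{\b^{1/2}}\bigl(e^{-\b/2}-e^{-t^2/2}\bigr)\dd t<0$, whereas your substitution $t=\sqrt{\b}\,u$ makes the negativity of $f''$ immediate without needing to compare integrands.
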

\begin{proof}
This follows by inspection of $f$ and 
\begin{align*}
f'(\b)&=\frac{1}{2\b^{1/2}}\int_{t=0}^{\b^{1/2}}e^{-t^2/2}\dd t.\\
f''(\b)&=\frac{1}{4\b^{3/2}}\int_{t=0}^{\b^{1/2}}\brac{e^{-\b/2}-e^{-t^2/2}}\dd t<0.
\end{align*}
\end{proof}
\begin{lemma}\label{g(b)}
$g'(0)=\infty, g'(\infty)=1$ and $g'$ is strictly monotone decreasing. This implies that $g$ is concave and for every $\alpha > 1$, there is a unique $\beta > 0$ such that $g'(\beta) = \alpha$.
\end{lemma}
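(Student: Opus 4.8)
The statement to prove is Lemma~\ref{g(b)}, which asserts properties of $g$ analogous to those established for $f$ in Lemma~\ref{f(b)}. Since the paper already records the identity $g(\beta)=\beta f(1/\beta)$, my plan is to derive everything about $g$ from the corresponding facts about $f$ by a change of variables, rather than redoing the integral analysis from scratch. First I would differentiate $g(\beta)=\beta f(1/\beta)$ to get $g'(\beta)=f(1/\beta)-\beta^{-1}f'(1/\beta)$. Using the explicit formulas $f(\gamma)=\gamma^{1/2}\int_0^{\gamma^{1/2}}e^{-t^2/2}\,\dd t+e^{-\gamma/2}$ and $f'(\gamma)=\frac{1}{2\gamma^{1/2}}\int_0^{\gamma^{1/2}}e^{-t^2/2}\,\dd t$ with $\gamma=1/\beta$, the two terms combine and one obtains a clean closed form, namely
\[
g'(\beta)=\frac{1}{2}\beta^{-1/2}\int_{t=0}^{\beta^{-1/2}}e^{-t^2/2}\,\dd t+e^{-1/(2\beta)},
\]
which is exactly $f(1/\beta)$ up to the constant $\tfrac12$ in front of the integral; in fact a direct computation shows $g'(\beta)=f'(1/\beta)+$ something, but the cleanest route is just to simplify the expression for $g'$ directly.

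Next I would read off the boundary behaviour. As $\beta\to\infty$, we have $\beta^{-1/2}\to 0$, so $\int_0^{\beta^{-1/2}}e^{-t^2/2}\,\dd t\sim \beta^{-1/2}$, giving $\frac12\beta^{-1/2}\cdot\beta^{-1/2}=\frac{1}{2\beta}\to 0$, while $e^{-1/(2\beta)}\to 1$; hence $g'(\infty)=1$. As $\beta\to 0^+$, we have $\beta^{-1/2}\to\infty$, so $\int_0^{\beta^{-1/2}}e^{-t^2/2}\,\dd t\to\sqrt{\pi/2}$, and the first term $\frac12\beta^{-1/2}\sqrt{\pi/2}\to\infty$ while the second stays bounded; hence $g'(0)=\infty$. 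For strict monotone decrease of $g'$, I would compute $g''$. Writing $u=\beta^{-1/2}$ and differentiating, $\frac{\dd}{\dd\beta}\bigl(\tfrac12 u\int_0^u e^{-t^2/2}\dd t\bigr)+\frac{\dd}{\dd\beta}e^{-1/(2\beta)}$; since $\dd u/\dd\beta=-\tfrac12\beta^{-3/2}$, the chain rule yields a combination of $\int_0^u e^{-t^2/2}\dd t$, $u e^{-u^2/2}$, and $\beta^{-2}e^{-1/(2\beta)}=u^4 e^{-u^2/2}$ terms. Alternatively, and more slickly, from $g(\beta)=\beta f(1/\beta)$ one gets $g''(\beta)=\beta^{-3}f''(1/\beta)$, and since Lemma~\ref{f(b)} gives $f''<0$ everywhere and $\beta^{-3}>0$ for $\beta>0$, we immediately conclude $g''(\beta)<0$, i.e.\ $g'$ is strictly decreasing. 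This is the cleanest argument and I would use it.

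Finally, the concavity of $g$ is just $g''<0$, already obtained, and the existence and uniqueness of $\beta>0$ with $g'(\beta)=\alpha$ for any $\alpha>1$ follows from the intermediate value theorem together with strict monotonicity: $g'$ is a continuous strictly decreasing bijection from $(0,\infty)$ onto its range, and since $g'(0^+)=\infty$ and $g'(\infty)=1$, that range is exactly $(1,\infty)$, so each $\alpha>1$ is hit exactly once. I do not anticipate any real obstacle here; the only mild care needed is justifying differentiation under the integral sign and the asymptotic estimates of $\int_0^u e^{-t^2/2}\,\dd t$ as $u\to 0$ and $u\to\infty$, both of which are standard. The slickest writeup reduces almost everything to Lemma~\ref{f(b)} via the substitution $\beta\mapsto 1/\beta$, so the main drafting decision is whether to present the self-contained computation of $g'$ and $g''$ or to lean entirely on the identity $g(\beta)=\beta f(1/\beta)$; I would do the latter for $g''$ (hence monotonicity and concavity) and a short direct computation for the two limits of $g'$.
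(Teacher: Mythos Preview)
Your proposal is correct and matches the paper's proof essentially line for line: the paper also differentiates the identity $g(\beta)=\beta f(1/\beta)$ to obtain $g'(\beta)=f(1/\beta)-\beta^{-1}f'(1/\beta)=\tfrac{1}{2}\beta^{-1/2}\int_0^{\beta^{-1/2}}e^{-t^2/2}\,\dd t+e^{-1/(2\beta)}$, then uses $g''(\beta)=\beta^{-3}f''(1/\beta)<0$ (invoking Lemma~\ref{f(b)}) for strict monotonicity and concavity, and reads off the two limits of $g'$ by inspection. Your write-up is in fact a bit more explicit about the limit computations and the intermediate value argument, but the strategy is identical.
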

\begin{proof}
We have $g(\b) = \b f( 1/\b)$ and
\begin{align*}
g'(\b)&=f(1/\b)-\frac{1}{\b}f'(1/\b) = \frac{1}{2\b^{1/2}}\int_{t=0}^{\b^{-1/2}}e^{-t^2/2}\dd t +e^{-1/2\b}.\\
g''(\b)&=\frac{1}{\b^{3}}f''(1/\b)<0.
\end{align*}
By inspection, $g'(0) = \infty$ and $g'(\infty) = 1$.
\end{proof}
\subsection{Expectation}
Our strategy will be to prove results about mappings $f:[n]\to [n]$, where $f(i)\neq i,i\in [n]$. Given $f$, we have a digraph $D_f$ with vertex set $[n]$ and edge set $A_f=\set{(i,f(i)):i\in [n]}$. Most of the analysis concerns the problem\\
{\bf Minimum Weight Constrained Mapping (MWCM):}
\[
\text{Minimise }W_{map}(f)=\sum_{i\in [n]}W_{(i,f(i))}\text{ subject to }C(f)=\sum_{i\in [n]}C_{(i,f(i))}\leq c_0.
\]
Let $f^*$ solve MWCM. We will argue that w.h.p. $D_{f^*}$ is close to being an arborescence and that a small change will result in a near optimum arborescence that will verify the claims of Theorem \ref{thm:main-res}. The following lemma begins our analysis of optimal mappings. {\red To motive it, note that the minimum over the mappings in the Lagrangean dual of MWCM is simply attained by choosing the best edge for each vertex (see \eqref{eq:phi_m} below).} We have expressed the following calculations with $n$ replacing $n-1$, but this does not affect the final results.
\begin{lemma}\label{lm:Emin}
Let $X_1,X_2,\ldots$ and $Y_1,Y_2,\ldots$ be i.i.d. random variables uniform on $[0,1]$. Then
\begin{enumerate}[{\bf E1:}]
\item For $\lambda \leq \frac{1}{n\log n}$, we have
\begin{equation}\label{eq:Emin-lam-small-very}
\E \min_{i \leq n}\{X_i + \lambda Y_i\} =(1+o(1))\frac{1}{n}.
\end{equation}
\item For $\frac{1}{n\log n} \leq \lambda \leq \frac{\log n}{n}$, we have
\begin{equation}\label{eq:Emin-lam-small-mid}
\E \min_{i \leq n}\{X_i + \lambda Y_i\} =(1+o(1))\frac{1}{n}\left(\sqrt{\lambda n}\int_0^{\sqrt{\lambda n}} e^{-\frac{t^2}{2}} \dd t + e^{-\lambda n/2}\right).
\end{equation}
\item For $\frac{\log n}{n} \leq \lambda \leq \frac{n}{\log n}$, we have
\begin{equation}\label{eq:Emin-lam-small}
\E \min_{i \leq n}\{X_i + \lambda Y_i\} =(1+o(1))\sqrt{\frac{\pi}{2}}\sqrt{\frac{\lambda}{n}}.
\end{equation}
\item For $\frac{n}{\log n} \leq \lambda \leq n\log n$, we have
\begin{equation}\label{eq:Emin-lam-mid}
\E \min_{i \leq n}\{X_i + \lambda Y_i\} =(1+o(1))\frac{\lambda}{n}\left(\sqrt{\frac{n}{\lambda}}\int_0^{\sqrt{\frac{n}{\lambda}}} e^{-\frac{t^2}{2}} \dd t + e^{-\frac{1}{2}\frac{n}{\lambda}}\right).
\end{equation}
\item For $\lambda \geq n\log n$, we have
\begin{equation}\label{eq:Emin-lam-large}
\E \min_{i \leq n}\{X_i + \lambda Y_i\} =(1+o(1))\frac{\lambda}{n}.
\end{equation}
\end{enumerate}
\end{lemma}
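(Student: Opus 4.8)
The plan is to compute $\E\min_{i\le n}\{X_i+\lambda Y_i\}$ via the standard tail-integral formula
\[
\E\min_{i\le n}\{X_i+\lambda Y_i\}=\int_0^{\infty}\p{\min_{i\le n}\{X_i+\lambda Y_i\}>t}\,\dd t=\int_0^{1+\lambda}\brac{1-p(t)}^n\,\dd t,
\]
where $p(t)=\p{X+\lambda Y\le t}$ is the distribution function of the sum of a uniform and $\lambda$ times a uniform. First I would write down $p(t)$ explicitly: it is piecewise quadratic, with $p(t)=\frac{t^2}{2\lambda}$ for $0\le t\le\min\{1,\lambda\}$, a linear middle piece, and a symmetric quadratic tail near $t=1+\lambda$; only the behaviour near $t=0$ (small $t$, so small $p$) matters because $(1-p)^n$ decays rapidly. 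Since the relevant range of $t$ is where $p(t)=o(1)$, I can replace $(1-p(t))^n$ by $e^{-np(t)}$ with negligible error (this needs a routine check that the contribution from $t$ with $p(t)$ bounded away from $0$ is exponentially small in $n$, and that $\log(1-p)=-p+O(p^2)$ with $np^2$ integrating to something lower order — both standard).

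The heart of the matter is then evaluating, in each regime, the Laplace-type integral
\[
I:=\int_0^{\infty} e^{-n\,p(t)}\,\dd t,\qquad p(t)\approx \tfrac{t^2}{2\lambda}\ \text{for }t\text{ small},\ \ p(t)\approx \tfrac{t}{1}\ \text{(or}\ \tfrac{t}{\lambda}\text{) in the linear range}.
\]
Which piece of $p$ governs the integral depends on where the Gaussian factor $e^{-nt^2/(2\lambda)}$ has decayed: its effective width is $\sqrt{\lambda/n}$. If $\sqrt{\lambda/n}\ll\min\{1,\lambda\}$ — equivalently $\lambda$ in a middle band, which after checking the arithmetic is exactly $\frac{\log n}{n}\le\lambda\le\frac{n}{\log n}$ — the quadratic piece alone contributes and $I\sim\int_0^\infty e^{-nt^2/(2\lambda)}\dd t=\sqrt{\pi\lambda/(2n)}$, giving E3. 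When $\lambda$ is so small that $\sqrt{\lambda/n}$ is no longer small compared to $\lambda$ (i.e.\ $\lambda\lesssim 1/n$, regime E1), the quadratic region $[0,\lambda]$ is too short to capture the decay; there the integrand stays near $1$ until $t\approx\lambda$, then $p$ becomes linear with slope $1$, and $I\sim\int e^{-nt}\dd t\sim 1/n$. The two genuinely delicate regimes are E2 and E4, the transitions: there $\sqrt{\lambda/n}$ is comparable to $\min\{1,\lambda\}$, so the integral sees both the quadratic nose and the linear shoulder of $p$, and one gets the incomplete-Gaussian-plus-exponential form $\sqrt{\lambda n}\int_0^{\sqrt{\lambda n}}e^{-t^2/2}\dd t+e^{-\lambda n/2}$ after the substitution $u=t\sqrt{n/\lambda}$ and splitting the integral at $t=\lambda$. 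E5 ($\lambda\ge n\log n$) is the mirror image of E1 with the roles of the two variables swapped, i.e.\ it is $\lambda$ times the E1 computation applied to $X/\lambda+Y$, so $I\sim\lambda/n$.

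I expect the main obstacle to be bookkeeping rather than any single hard estimate: one must (i) pin down the exact piecewise form of $p(t)$ and verify that the claimed $\lambda$-thresholds are precisely the points where one piece takes over from another, so that the five cases tile $(0,\infty)$ with no gaps; (ii) control the error in replacing $(1-p)^n$ by $e^{-np}$ uniformly across all five regimes, including the boundary values of $\lambda$ where $np(t)$ is only $\Theta(\log n)$ for the relevant $t$; and (iii) carry out the substitution and splitting in E2/E4 carefully enough that the cross terms are genuinely $o(1)$ relative to the main term. All three are routine but need to be done with care about the $(1+o(1))$ factors; none requires a new idea beyond Laplace's method for integrals with a piecewise-smooth exponent.
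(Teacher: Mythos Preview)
Your proposal is correct and follows essentially the same approach as the paper: tail-integral formula, explicit piecewise CDF of $X+\lambda Y$, and asymptotic analysis of the resulting integrals. The one notable difference in execution is that the paper treats only $\lambda\ge 1$ directly (computing the linear and tail pieces of the integral in closed form rather than via the $e^{-np}$ approximation) and then obtains all three small-$\lambda$ cases E1, E2, and the lower half of E3 at once from the identity $\E\min_i\{X_i+\lambda Y_i\}=\lambda\,\E\min_i\{Y_i+\lambda^{-1}X_i\}$; you invoke this duality only for E5 and plan to do E1, E2 by hand. Either route works, but using the duality systematically halves the bookkeeping you identified as the main obstacle.
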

\begin{proof}
Thanks to independence
\begin{align}
\E \min_{i \leq n}\{X_i + \lambda Y_i\} &= \int_0^{\infty} \p{\min_{i \leq n}\{X_i + \lambda Y_i\} > t} \dd t \nonumber\\
&= \int_0^{\infty} \Big[\p{X_1 + \lambda Y_1 > t}\Big]^n \dd t.\label{tti}
\end{align}
\textbf{Case 1.} $\lambda \geq 1$. \\
It follows from an elementary computation that (for details see e.g. the appendix in \cite{FT})
\[
\p{X_1 + \lambda Y_1 > t} = \begin{cases}
1 - \frac{t^2}{2\lambda}, & 0 < t < 1, \\
1 + \frac{1}{2\lambda}-\frac{t}{\lambda}, & 1 \leq t < \lambda, \\
\frac{(1+\lambda-t)^2}{2\lambda}, & \lambda \leq t < 1+\lambda, \\
0, & t \geq 1+\lambda.
\end{cases}
\]
Thus,
\begin{align}\notag
\E \min_{i \leq n}\{X_i + \lambda Y_i\} &= \int_0^1 \left(1 - \frac{t^2}{2\lambda}\right)^n \dd t \\\notag
&+ \int_1^{\lambda} \left(1 + \frac{1}{2\lambda}-\frac{t}{\lambda}\right)^n \dd t \\\notag
&+ \int_\lambda^{1+\lambda} \left(\frac{(1+\lambda-t)^2}{2\lambda}\right)^n \dd t \\\label{eq:Emin-lam>1}
&= \int_0^1 \left(1 - \frac{t^2}{2\lambda}\right)^n \dd t \\\notag
&\quad+ \frac{\lambda}{n+1}\left[\left(1 - \frac{1}{2\lambda}\right)^{n+1}-\left(\frac{1}{2\lambda}\right)^{n+1}\right] + \frac{1}{2n+1}\left(\frac{1}{2\lambda}\right)^{2n}.
\end{align}

\bigskip
\textbf{Case 1.1.} $1 \leq \lambda \leq \frac{n}{\log n}$\\
A change of variables gives
\begin{equation}\label{eq:integral}
\int_0^1 \left(1 - \frac{t^2}{2\lambda}\right)^n \dd t = \sqrt{\lambda}\int_0^{\frac{1}{\sqrt{\lambda}}} \left(1 - \frac{t^2}{2}\right)^n \dd t.
\end{equation}
We have {\red $\sqrt{\log n/n} \leq \frac{1}{\sqrt{\lambda}} \leq 1$} and
\[
\int_{\sqrt{\log n/n}}^{\red 1} \left(1 - \frac{t^2}{2}\right)^n \dd t \leq \int_{\sqrt{\log n/n}}^1 e^{-\frac{nt^2}{2}} \dd t = \frac{1}{\sqrt{n}}\int_{\sqrt{\log n}}^1 e^{-\frac{t^2}{2}} \dd t = o(n^{-1/2}).
\]
Therefore
\[
\sqrt{\lambda}\int_0^{\frac{1}{\sqrt{\lambda}}} \left(1 - \frac{t^2}{2}\right)^n \dd t = \sqrt{\lambda}\int_0^{\sqrt{\log n/n}} \left(1 - \frac{t^2}{2}\right)^n \dd t + \sqrt{\lambda}o(n^{-1/2}).
\]
Using $1+x = e^{x + O(x^2)}$ as $x \to 0$, we get
\begin{align*}
\int_0^{\sqrt{\log n/n}} \left(1 - \frac{t^2}{2}\right)^n \dd t &= \int_0^{\sqrt{\log n/n}} e^{-\frac{nt^2}{2}+O(nt^4)} \dd t \\
&= (1+o(1))\int_0^{\sqrt{\log n/n}} e^{-\frac{nt^2}{2}} \dd t \\
&= (1+o(1))\frac{1}{\sqrt{n}}\int_0^{\sqrt{\log n}} e^{-\frac{t^2}{2}} \dd t \\
&= (1+o(1))\frac{1}{\sqrt{n}}\int_0^\infty e^{-\frac{t^2}{2}} \dd t + o(n^{-1/2}) \\
&= (1+o(1))\frac{1}{\sqrt{n}}\sqrt{\frac{\pi}{2}} + o(n^{-1/2}).
\end{align*}
Putting these together back into \eqref{eq:integral} yields
\[
\int_0^1 \left(1 - \frac{t^2}{2\lambda}\right)^n \dd t = (1+o(1))\sqrt{\frac{\pi}{2}}\sqrt{\frac{\lambda}{n}} + \sqrt{\lambda}o(n^{-1/2}) = (1+o(1))\sqrt{\frac{\pi}{2}}\sqrt{\frac{\lambda}{n}}.
\]
Since
\begin{align*}
\frac{\lambda}{n+1}\left[\left(1 - \frac{1}{2\lambda}\right)^{n+1}-\left(\frac{1}{2\lambda}\right)^{n+1}\right] + \frac{1}{2n+1}\left(\frac{1}{2\lambda}\right)^{2n} &= O\left(\frac{\lambda}{n}\right) \\
&= \sqrt{\frac{\lambda}{n}}O\left(\sqrt{\frac{1}{\log n}}\right),
\end{align*}
from \eqref{eq:Emin-lam>1} we can finally obtain \eqref{eq:Emin-lam-small}.

\bigskip
\textbf{Case 1.2.} $\frac{n}{\log n} \leq \lambda \leq n\log n$\\
Since for $t \leq \frac{1}{\sqrt{\lambda}}$, $(1 - \frac{t^2}{2})^n =  e^{-\frac{nt^2}{2}}e^{O(nt^4)} = e^{-\frac{nt^2}{2}}e^{O(\frac{\log^2 n}{n})}$, directly from \eqref{eq:integral}, we get
\[
\int_0^1 \left(1 - \frac{t^2}{2\lambda}\right)^n \dd t = (1+o(1))\sqrt{\lambda}\int_0^{\frac{1}{\sqrt{\lambda}}} e^{-\frac{nt^2}{2}} \dd t = (1+o(1))\sqrt{\frac{\lambda}{n}}\int_0^{\sqrt{\frac{n}{\lambda}}} e^{-\frac{t^2}{2}} \dd t.
\]
Moreover,
\begin{align*}
\frac{\lambda}{n+1}&\left[\left(1 - \frac{1}{2\lambda}\right)^{n+1}-\left(\frac{1}{2\lambda}\right)^{n+1}\right] + \frac{1}{2n+1}\left(\frac{1}{2\lambda}\right)^{2n} \\
&=(1+o(1))\frac{\lambda}{n}e^{-\frac{n}{2\lambda}+ O\left(\frac{n}{\lambda^2}\right)} + O\left(\left(\frac{\log n}{n}\right)^n\right) \\
&=\frac{\lambda}{n}e^{-\frac{n}{2\lambda}}\left(1+o(1) + \frac{n}{\lambda}e^{\frac{n}{2\lambda}}O\left(\left(\frac{\log n}{n}\right)^n\right)\right) \\
&= \frac{\lambda}{n}e^{-\frac{n}{2\lambda}}\left(1+o(1)\right).
\end{align*}
Plugging these back in \eqref{eq:Emin-lam>1} yields \eqref{eq:Emin-lam-mid}.

\bigskip
\textbf{Case 1.3.} $\lambda \geq n\log n$\\
Plainly,
\[
\int_0^1 \left(1 - \frac{t^2}{2\lambda}\right)^n \dd t = O(1) = \frac{\lambda}{n}o(1).
\]
Since $\left(1 - \frac{1}{2\lambda}\right)^{n+1} = e^{O(\frac{n}{\lambda})} = 1 + o(1)$, we have
\begin{align*}
\frac{\lambda}{n+1}&\left[\left(1 - \frac{1}{2\lambda}\right)^{n+1}-\left(\frac{1}{2\lambda}\right)^{n+1}\right] + \frac{1}{2n+1}\left(\frac{1}{2\lambda}\right)^{2n} = \frac{\lambda}{n}\left(1 + o(1)\right).
\end{align*}
Putting these in \eqref{eq:Emin-lam>1} gives \eqref{eq:Emin-lam-large}.

\textbf{Case 2.} $\lambda \leq 1$\\
We write
\[
\E \min_{i \leq n}\{X_i + \lambda Y_i\} = \lambda \E \min_{i \leq n}\{X_i + \lambda^{-1} Y_i\}
\]
and then apply \eqref{eq:Emin-lam-small}, \eqref{eq:Emin-lam-mid} and \eqref{eq:Emin-lam-large} to $\lambda^{-1}$, multiply the answers by $\lambda$ to get \eqref{eq:Emin-lam-small-very}, \eqref{eq:Emin-lam-small-mid} and the missing range $\frac{\log n}{n} \leq \lambda \leq 1$ of \eqref{eq:Emin-lam-small}.
\end{proof}

\begin{corollary}\label{cor:Emin-low-bd}
Under the assumptions of Lemma \ref{lm:Emin}, we have
\beq{corbd}{
n\E \min_{i \leq n}\{X_i + \lambda Y_i\} = \Omega(\max\{1,\sqrt{\lambda n}\}).
}
\end{corollary}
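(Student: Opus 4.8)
The plan is to read the bound directly off Lemma~\ref{lm:Emin}, which already records the precise asymptotics of $\E\min_{i\le n}\{X_i+\lambda Y_i\}$ in the five regimes E1--E5 partitioning $(0,\infty)$; in each regime it then suffices to bound the explicit right-hand side from below by a constant multiple of $\max\{1,\sqrt{\lambda n}\}$. Two elementary facts do all the work: the quantity $\int_0^1 e^{-t^2/2}\,\dd t$ is a positive absolute constant, so that $\sqrt{x}\int_0^{\sqrt x}e^{-t^2/2}\,\dd t\ge\brac{\int_0^1 e^{-t^2/2}\,\dd t}\sqrt x$ whenever $x\ge1$; and $e^{-x/2}\ge e^{-1/2}$ whenever $0\le x\le1$. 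Below, $c$ denotes a positive absolute constant, not necessarily the same at each occurrence.

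In E1, and in the part of E2 with $\lambda n\le1$, we have $\max\{1,\sqrt{\lambda n}\}=1$: formula \eqref{eq:Emin-lam-small-very} gives $n\E\min=(1+o(1))$, and in \eqref{eq:Emin-lam-small-mid} the positive term $e^{-\lambda n/2}\ge e^{-1/2}$ alone yields $n\E\min\ge(1+o(1))e^{-1/2}$. In the part of E2 with $\lambda n\ge1$ and throughout E3 we have $\max\{1,\sqrt{\lambda n}\}=\sqrt{\lambda n}$: in E3 formula \eqref{eq:Emin-lam-small} gives $n\E\min=(1+o(1))\sqrt{\pi/2}\,\sqrt{\lambda n}$, while in E2 the term $\sqrt{\lambda n}\int_0^{\sqrt{\lambda n}}e^{-t^2/2}\,\dd t\ge c\sqrt{\lambda n}$ (as $\sqrt{\lambda n}\ge1$) gives $n\E\min\ge(1+o(1))c\sqrt{\lambda n}$.

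It remains to treat E4 and E5, where the hypothesis $\lambda\ge n/\log n$ forces $\lambda n\ge1$, so $\max\{1,\sqrt{\lambda n}\}=\sqrt{\lambda n}$. In E5, \eqref{eq:Emin-lam-large} gives $n\E\min=(1+o(1))\lambda$, and $\lambda\ge n\log n\ge n$ implies $\lambda\ge\sqrt{\lambda n}$. In E4, multiplying the bracket in \eqref{eq:Emin-lam-mid} by $\lambda$ turns its first term into $\sqrt{\lambda n}\int_0^{\sqrt{n/\lambda}}e^{-t^2/2}\,\dd t$ and its second into $\lambda e^{-n/(2\lambda)}$; if $n/\lambda\ge1$ the first term is $\ge c\sqrt{\lambda n}$, and if $n/\lambda<1$, i.e.\ $\lambda>n$, the second is $\ge\lambda e^{-1/2}\ge e^{-1/2}\sqrt{\lambda n}$ (using $\lambda\ge\sqrt{\lambda n}$ when $\lambda\ge n$). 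Combining these cases with the $\Omega(1)$ bound of the previous paragraph gives $n\E\min=\Omega(\max\{1,\sqrt{\lambda n}\})$, as claimed.

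There is no genuine obstacle here; the only point requiring care is that the transition between the ``$1$'' regime and the ``$\sqrt{\lambda n}$'' regime occurs where $\lambda n$ is of order $1$, which falls strictly inside the range covered by E2, so in E2 one must retain the $e^{-\lambda n/2}$ term when $\lambda n\le1$ and the Gaussian-integral term when $\lambda n\ge1$ (and symmetrically in E4 according to whether $n/\lambda$ is below or above $1$). Alternatively, one can bypass the asymptotics entirely and work from \eqref{tti} with the piecewise formula for $\p{X_1+\lambda Y_1>t}$ from the proof of Lemma~\ref{lm:Emin}: after using the scaling identity $\E\min_i\{X_i+\lambda Y_i\}=\lambda\,\E\min_i\{\lambda^{-1}X_i+Y_i\}$ to reduce to $\lambda\ge1$, bounding $\p{X_1+\lambda Y_1>t}\ge1-\tfrac1n$ on a $t$-window of length of order $\max\{1,\sqrt{\lambda/n}\}$ yields the same estimate; the route through Lemma~\ref{lm:Emin} is simply shorter to write down.
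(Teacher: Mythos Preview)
Your proof is correct and follows essentially the same route as the paper. Both arguments read off the five regimes E1--E5 of Lemma~\ref{lm:Emin} and then lower-bound the function $f(\beta)=\sqrt{\beta}\int_0^{\sqrt\beta}e^{-t^2/2}\,\dd t+e^{-\beta/2}$ appearing in E2 and E4 by treating its two summands separately; the paper packages this as the pair of inequalities $f(\beta)\ge1$ (from Lemma~\ref{f(b)}) and $f(\beta)\ge\tfrac12\sqrt\beta$, while you carry out the equivalent case split $\beta\le1$ versus $\beta\ge1$ inline, but the content is the same.
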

\begin{proof}
This follows directly from \eqref{eq:Emin-lam-small-very} - \eqref{eq:Emin-lam-large} and the fact that $f(\beta) \geq 1$ (Lemma~\ref{f(b)}) as well as the lower bound
\begin{align*}
f(\beta) \geq \max\{\sqrt{\beta}\int_0^{\sqrt{\beta}} e^{-t^2/2}\dd t, e^{-\beta/2}\} &\geq  \max\left\{\sqrt{\beta}\int_0^{\sqrt{\beta}} e^{-t^2/2}\dd t, \sqrt{\beta}\1_{\{\beta \leq \frac{1}{2}\}}\right\} \\
&\geq \frac{1}{2}\sqrt{\beta}.
\end{align*}
\end{proof}

\subsection{Concentration}\label{concsec}
Again $n$ replaces $n-1$ in the calculations.
\begin{lemma}\label{lm:conc}
Let $W_{(i,j)}$ and $C_{(i,j)}$, $i, j \leq n$, be i.i.d. random variables uniform on $[0,1]$. 
Let 
$\lambda \in [0, n\log n]$. For $X_i = \min_{j} \{W_{(i,j)} + \lambda C_{(i,j)}\}$, $S = \sum_{i \leq n} X_i$ and $\varepsilon = {\red \Theta(n^{-1/5})}$, we have
\begin{equation}\label{eq:conc}
\p{|S - \E S| > \varepsilon \E S} = O(n^{-99}).
\end{equation}
Moreover,
\begin{equation}\label{eq:conc-Cmax}
\p{\exists i:\; X_i > 10(1+\lambda)\sqrt{\log n/n}} \leq n^{-99}.
\end{equation}
\end{lemma}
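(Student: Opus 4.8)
The plan is to establish \eqref{eq:conc} via a bounded-differences / Azuma--Hoeffding argument applied to the random variable $S = \sum_{i\le n} X_i$, viewed as a function of the $2n^2$ independent entries $\{W_{(i,j)}, C_{(i,j)}\}$. The subtlety is that a single entry can change $S$ by an unbounded amount in principle (if a weight drops to near $0$), so a naive application of McDiarmid is too weak; instead I would first truncate. Concretely, I would run the argument on the event $\mathcal{G} = \{\forall i:\; X_i \le 10(1+\lambda)\sqrt{\log n/n}\}$, whose complement has probability $\le n^{-99}$ by \eqref{eq:conc-Cmax} (which I prove first, see below). On $\mathcal{G}$, changing any one coordinate $W_{(i,j)}$ or $C_{(i,j)}$ changes only the single summand $X_i$, and it changes it by at most $10(1+\lambda)\sqrt{\log n/n}$ (the value of $X_i$ is a min, so it moves monotonically and stays within the truncation band). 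Hence $S$ restricted to $\mathcal{G}$ is Lipschitz in each of the $n^2$ ``active'' coordinates with constant $c = 10(1+\lambda)\sqrt{\log n/n}$ (the cost coordinates affect $X_i$ too, but only through the same single summand, so they contribute an identical bound). McDiarmid's inequality then gives
\[
\p{|S - \E S| > t} \le 2\exp\!\left(-\frac{2t^2}{n^2 c^2}\right) + \p{\overline{\mathcal{G}}}
= 2\exp\!\left(-\frac{2t^2}{100\, n^2 (1+\lambda)^2 \log n/n}\right) + n^{-99}.
\]
Setting $t = \varepsilon\, \E S$ with $\varepsilon = \Theta(n^{-1/5})$, and using the lower bound $\E S = n\,\E X_1 = \Omega(\max\{1,\sqrt{\lambda n}\})$ from Corollary \ref{cor:Emin-low-bd}, I need to check that the exponent is $\omega(\log n)$, i.e. that $\varepsilon^2 (\E S)^2 \big/ \big(n (1+\lambda)^2 \log n\big) \to \infty$ fast enough. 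In the worst case $\lambda$ large (up to $n\log n$), $\E S = \Omega(\lambda) $ in the appropriate regime (from \eqref{eq:Emin-lam-mid}, $n\E X_1 \asymp \lambda \cdot g(n/\lambda)$ which is $\Omega(\lambda)$), and $(1+\lambda)^2 \asymp \lambda^2$, so the ratio is $\asymp \varepsilon^2 \lambda^2 / (n \lambda^2 \log n) = \varepsilon^2/(n\log n) = n^{-2/5}/(n\log n)$, which unfortunately goes to $0$ — so a uniform Lipschitz constant $c \propto (1+\lambda)$ is \emph{not} good enough for large $\lambda$. I would therefore split the range of $\lambda$: for $\lambda \le \log n/n$ use $\E S = \Omega(1)$ and the truncation band is $O(\sqrt{\log n /n})$, giving exponent $\asymp \varepsilon^2 n / \log n = n^{3/5}/\log n \to \infty$; for $\lambda \ge \log n/n$ I would instead \emph{rescale}, writing $X_i = \lambda \min_j\{\lambda^{-1} W_{(i,j)} + C_{(i,j)}\}$ and noting that the relevant magnitude of a truncated summand relative to $\E S$ is the right dimensionless quantity — precisely, $X_i \le 10(1+\lambda)\sqrt{\log n/n}$ while $\E S = \Omega(\sqrt{\lambda n})$ for $\lambda$ up to $n/\log n$ and $=\Omega(\lambda)$ beyond, so the ratio (truncation)/$\E S$ is $O(\sqrt{(1+\lambda)^2 \log n/n}\,/\sqrt{\lambda n}) = O((1+\lambda)\sqrt{\log n}/(n\sqrt{\lambda}))$, which for $\lambda\le n/\log n$ is $O(\sqrt{\log n/n}\cdot \sqrt{1/\lambda}\cdot \text{stuff})$; tracking this carefully shows McDiarmid's exponent is $\asymp \varepsilon^2 (\E S/( \sqrt{n}\,c))^2 = \varepsilon^2 \cdot n \cdot (\E S)^2/(n^2 c^2)$ and with $\E S \gg \sqrt{n}\, c \cdot n^{1/10}$ one gets the $O(n^{-99})$ bound. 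The cleanest route is to verify, regime by regime using Lemma \ref{lm:Emin}, the single inequality $\E S \ge n^{2/5+\delta}\cdot \sqrt{n}\, c$ for some $\delta>0$, which makes the exponent $\ge n^{4/5+2\delta}/n = n^{-1/5+2\delta}\to\infty$ after being multiplied by... — so the honest statement is that I must choose the truncation threshold more tightly (replace $10(1+\lambda)\sqrt{\log n/n}$ by the tightest bound consistent with \eqref{eq:Emin-lam-mid}–\eqref{eq:Emin-lam-large}'s tail estimates) and re-examine; this balancing of truncation level against $\E S$ across the five $\lambda$-regimes is the main obstacle.

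For \eqref{eq:conc-Cmax}, the argument is a direct union bound. Fix $i$. The $n$ variables $W_{(i,j)} + \lambda C_{(i,j)}$ are i.i.d., and
\[
\p{X_i > u} = \p{W_{(i,1)} + \lambda C_{(i,1)} > u}^n.
\]
For $u = 10(1+\lambda)\sqrt{\log n/n}$ I would lower-bound $\p{W+\lambda C \le u}$: since $W,C$ are uniform, $\p{W \le u/2,\ \lambda C\le u/2} = (u/2)\cdot\min\{1,u/(2\lambda)\} \ge \tfrac12 u \cdot \tfrac{u}{2(1+\lambda)} = \tfrac{u^2}{4(1+\lambda)} = 25\log n/n$, so $\p{W+\lambda C > u} \le 1 - 25\log n/n$, and hence $\p{X_i > u} \le (1 - 25\log n/n)^n \le e^{-25\log n} = n^{-25}$. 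A union bound over the $n$ choices of $i$ gives $\le n^{-24} \le n^{-99}$... which is too weak; I would simply take the constant larger than $10$ (the statement's ``$10$'' presumably comes with enough slack, or one uses $u = K(1+\lambda)\sqrt{\log n/n}$ with $K$ chosen so that $K^2/4 \ge 100$), i.e. pick the constant so that $\p{X_i > u}\le n^{-100}$ and conclude by the union bound. I would state this cleanly with a generic constant and note the $10$ is for concreteness.

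In summary: (1) prove \eqref{eq:conc-Cmax} by the i.i.d. union bound above, choosing the constant large enough; (2) condition on the good event $\mathcal{G}$ from step (1); (3) on $\mathcal{G}$, observe $S$ has the bounded-differences property with per-coordinate constant equal to the truncation band and $n^2$ relevant coordinates; (4) apply McDiarmid and optimize the truncation threshold against the lower bound $\E S = \Omega(\max\{1,\sqrt{\lambda n}\})$ from Corollary \ref{cor:Emin-low-bd}, splitting into the $\lambda$-regimes of Lemma \ref{lm:Emin} to confirm the exponent is $\omega(\log n)$ in each; (5) absorb $\p{\overline{\mathcal{G}}} = O(n^{-99})$ into the final bound. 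The delicate point, and the one I would spend the most care on, is step (4): the crude Lipschitz constant $\propto (1+\lambda)\sqrt{\log n/n}$ is borderline for $\lambda$ near $n\log n$, so one must either sharpen the truncation level using the exponential tail of $\min_j\{W+\lambda C\}$ implicit in Lemma \ref{lm:Emin}, or handle the large-$\lambda$ regime by the rescaling $\lambda \leftrightarrow \lambda^{-1}$ symmetry used in the proof of that lemma, reducing to $\lambda \le 1$ where the bound is comfortable.
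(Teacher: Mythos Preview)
Your argument for \eqref{eq:conc-Cmax} is essentially right in spirit (and the paper's is the same union-bound idea), though the paper's estimate is cleaner: it uses $W+\lambda C \le (1+\lambda)\max\{W,C\}$ and $\p{\max\{W,C\}\ge u}=1-u^2$, which with $u=10\sqrt{\log n/n}$ gives $\p{X_i>M}\le (1-100\log n/n)^n\le n^{-100}$ directly, so the constant $10$ is exactly enough after the union bound. Your product bound loses a factor of $4$, which is why you end up needing a larger constant.

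The real gap is in step (4). McDiarmid/Azuma--Hoeffding is the wrong inequality here, and the problem is not fixable by tightening the truncation or by the $\lambda\leftrightarrow\lambda^{-1}$ symmetry. The issue is structural: Hoeffding-type bounds give an exponent of order $t^2/\sum c_k^2$, and whether you take $2n^2$ scalar coordinates or $n$ block coordinates (one block per $i$), the best you can get is an exponent $\asymp \varepsilon^2(\E S)^2/(nM^2)$. Even in the easiest regime $\lambda\le 1$, with $\E S=\Theta(1)$ and $M\asymp\sqrt{\log n/n}$, this is $\asymp \varepsilon^2/\log n=n^{-2/5}/\log n\to 0$; your claimed exponent $\varepsilon^2 n/\log n$ is an arithmetic slip. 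The difficulty is that each $X_i$ has mean of order $M/\sqrt{n\log n}$ (or smaller), far below its bound $M$, and Hoeffding ignores this.

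The fix, and what the paper does, is to exploit the independence of the $X_i$ directly: set $\tilde X_i=X_i\1_{\{X_i\le M\}}$, note these are independent with $\tilde X_i\in[0,M]$, and apply the \emph{multiplicative} Chernoff bound $\p{|\tilde S-\E\tilde S|>\varepsilon\E\tilde S}\le 2\exp\{-\varepsilon^2\E\tilde S/(3M)\}$. This exponent is linear in $\E S/M$ rather than quadratic in $\E S/(\sqrt{n}M)$, and one then checks uniformly over $\lambda\in[0,n\log n]$ that $\E S/M=\Omega(\max\{1,\sqrt{\lambda n}\})/((1+\lambda)\sqrt{\log n/n})=\Omega(\sqrt{n}/\log n)$, using $\max\{1,\sqrt{\lambda n}\}/(1+\lambda)\ge\tfrac12(\log n)^{-1/2}$. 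Hence the exponent is $\Omega(\varepsilon^2\sqrt{n}/\log n)=\Omega(n^{1/10})$, which is what is needed. The passage from $\E\tilde S$ to $\E S$ costs only $O((1+\lambda)n\cdot n^{-100})$, negligible by \eqref{eq:conc-Cmax}.
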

\begin{proof}
Let $M = 10(1+\lambda)\sqrt{\log n/n}$ and $B$ be the event that for some $i$, $X_i \geq M$. We have,
\begin{equation}\label{eq:conc-two-terms}
\p{|S - \E S| > \varepsilon \E S} \leq \p{B} + \p{(|S - \E S| > \varepsilon \E S)\wedge B^c}.  
\end{equation}
First we bound $\p{B}$. By the union bound and independence,
\[
\p{B} \leq n\p{X_1 \geq M} = n\left[\p{W_{(1,1)}+\lambda C_{(1,1)} \geq M}\right]^{n}.
\]
We use $W_{(1,1)}+\lambda C_{(1,1)} \leq (1+\lambda)\max\{W_{(1,1)},C_{(1,1)}\}$ and note that since these variables are uniform, we have $\p{\max\{W_{(1,1)},C_{(1,1)}\} \geq u} = 1 - u^2$ for $u < 1$. We thus get
\[
\p{B} \leq n\left[1 - 100\frac{\log n}{n}\right]^{n} \leq ne^{-100\log n} = n^{-99},
\]
which establishes \eqref{eq:conc-Cmax}.

The second term in \eqref{eq:conc-two-terms} can be bounded using Chernoff's inequality because on $B^c$, $X_i = X_i\1_{X_i \leq M}$, that is $S$ can be treated as a sum of $n$ independent random variables $\tilde X_i = X_i\1_{X_i \leq M}$ with $\tilde X_i \in [0,M]$. Clearly $\tilde X_i \leq X_i$ and $\tilde S = \sum \tilde X_i \leq S$, so
\begin{align*}
\p{(|S - \E S| > \varepsilon \E S)\wedge B^c} = \p{(|\tilde S - \E S| > \varepsilon \E S)\wedge B^c}  \leq \p{|\tilde S - \E S| > \varepsilon \E S}.
\end{align*}
By the Chernoff bound
\[
\p{|\tilde S - \E \tilde S| > \varepsilon \E \tilde S} \leq 2\exp\left\{-\frac{\varepsilon^2\E\tilde S}{3M}\right\}.
\]
Note that
\[
|\tilde S - \E S| \leq |\tilde S - \E \tilde S| + |\E S - \E \tilde S|.
\]
and
\begin{align*}
 |\E S - \E \tilde S| = \left|\E \sum X_i\1_{X_i > M} \right| \leq (1+\lambda)\E\sum \1_{X_i > M} &\leq (1+\lambda)n\p{X_1  > M} \\
 &= {\red O((n\log n)\cdot n\cdot n^{-99}) = O(n^{-97})},
\end{align*}
thanks to \eqref{eq:conc-Cmax}. Moreover, 
by Corollary \ref{cor:Emin-low-bd},
\[
\E S = \Omega(\max\{1,\sqrt{\lambda n}\})
\]
and  {\red by the assumption $\e = \Omega(n^{-1/5})$}, so
\[
|\E S - \E \tilde S| {\red = O(n^{-97}) \leq \Omega(n^{-1/5})} \leq \frac{1}{2}\varepsilon \E S
\]
and we get
\begin{align*}
\p{|\tilde S - \E S| > \varepsilon \E S} \leq  \p{|\tilde S - \E \tilde S| > \frac{1}{2}\varepsilon \E S} &\leq \p{|\tilde S - \E \tilde S| > \frac{1}{2}\varepsilon \E \tilde S} \\
&\leq 2\exp\left\{-\frac{\varepsilon^2\E\tilde S}{12M}\right\}.
\end{align*}
Finally, observe that
\[
\frac{\E \tilde S}{M} \geq \frac{\E S}{2M} = \frac{\Omega(\max\set{1,\sqrt{\lambda n}})}{20(1+\lambda)\sqrt{\log n}}\sqrt{n}
\]
and for $\lambda \leq n\log n$, we have $\frac{\max\{1,\sqrt{\lambda n}\}}{1+\lambda} \geq \frac{1}{2}\sqrt{\frac{1}{\log n}}$. Consequently,
\[
\frac{\varepsilon^2\E \tilde S}{12M} = \Omega\left(\frac{\varepsilon^2 \sqrt{n}}{\log n}\right) = {\red \Omega(n^{-1/10}n^{1/2}/\log n)} = \Omega(n^{1/10}),
\]
so
\[
\p{|S - \E S| > \varepsilon \E S, B^c} = O(e^{-n^{1/10}}).
\]
In view of \eqref{eq:conc-two-terms}, this combined with \eqref{eq:conc-Cmax} finishes the proof of \eqref{eq:conc}.
\end{proof}

\begin{corollary}\label{cor1}
Let $M_n$ denote the minimum  weight of a mapping with weights $W_e+\l C_e,e\in E(\vec{K}_n)$. Then with probability $1-O(n^{-90})$,
\[
M_n\approx \begin{cases}(\pi\l n/2)^{1/2}&{\bf E3/Case\ 1}.\\f(\l n)&{\bf E2/Case\ 2}.\\ng(\l/n) &{\bf E4/Case\ 3}.\end{cases}
\]
\beq{Wmax}{
W_{\max}{\red =\max_i W_i}\leq \begin{cases}O\brac{(1+\l)\sqrt{\log n/n}}&{\bf E3/Case\ 1}.\\O\brac{\sqrt{\log n/n}}&{\bf E2/Case\ 2}.\\1&{\bf E4/Case\ 3}.\end{cases}
}
\beq{Cmax}{
C_{\max}{\red =\max_iC_i}\leq \begin{cases}O\brac{\frac{1}{\l}+1}\sqrt{\log n/n}&{\bf E3/Case\ 1}.\\1&{\bf E2/Case\ 2}.\\O(\log n/n)&{\bf E4/Case\ 3}.\end{cases}
}
\end{corollary}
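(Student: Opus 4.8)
The plan is to read off each of the three asymptotic expressions for $M_n$ from Lemma \ref{lm:conc} together with the relevant item of Lemma \ref{lm:Emin}, and to obtain the bounds on $W_{\max}$ and $C_{\max}$ from \eqref{eq:conc-Cmax}. First I would observe that $M_n = \min_f \sum_{i} \min_j\{W_{(i,f(i))} + \lambda C_{(i,f(i))}\} = \sum_{i\le n} X_i = S$, where $X_i = \min_j\{W_{(i,j)}+\lambda C_{(i,j)}\}$: since the choice of $f(i)$ for distinct $i$ is unconstrained in a mapping (no acyclicity requirement), the minimum decouples over vertices and is attained by taking, for each $i$, the cheapest outgoing edge in the combined weight $W+\lambda C$. (One should note, as the paper does, that insisting $f(i)\neq i$ changes each $X_i$ by removing one of $n$ candidate edges, which does not affect the $(1+o(1))$ asymptotics; this is the meaning of the remark that $n$ replaces $n-1$.) Then $\E S = n\,\E\min_{i\le n}\{X_i+\lambda Y_i\}$ with the roles of the index: more precisely $\E S = n \cdot \E\min_{j\le n}\{W_{(1,j)}+\lambda C_{(1,j)}\}$, which is exactly the quantity estimated in Lemma \ref{lm:Emin}.

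Next I would substitute the appropriate case of Lemma \ref{lm:Emin}. In the regime $\tfrac{\log n}{n}\le \lambda\le \tfrac{n}{\log n}$ (labelled {\bf E3}, which corresponds to Case 1 via $\lambda\leftrightarrow$ the Lagrange multiplier scaling) equation \eqref{eq:Emin-lam-small} gives $\E S = (1+o(1))\sqrt{\pi/2}\,\sqrt{\lambda n} = (1+o(1))(\pi\lambda n/2)^{1/2}$. For $\tfrac{1}{n\log n}\le\lambda\le\tfrac{\log n}{n}$ ({\bf E2}/Case 2), equation \eqref{eq:Emin-lam-small-mid} gives $\E S = (1+o(1))\big(\sqrt{\lambda n}\int_0^{\sqrt{\lambda n}}e^{-t^2/2}\dd t + e^{-\lambda n/2}\big) = (1+o(1))f(\lambda n)$ by the definition of $f$. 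For $\tfrac{n}{\log n}\le\lambda\le n\log n$ ({\bf E4}/Case 3), equation \eqref{eq:Emin-lam-mid} gives $\E S = (1+o(1))\tfrac{\lambda}{n}\big(\sqrt{n/\lambda}\int_0^{\sqrt{n/\lambda}}e^{-t^2/2}\dd t + e^{-n/(2\lambda)}\big) = (1+o(1))\,\tfrac{\lambda}{n}\cdot\tfrac{1}{(\lambda/n)}\cdot g(\lambda/n)\cdot$ — here I would use $g(\beta)=\beta f(1/\beta)$, so that $n\,(\lambda/n)\cdot$(bracket) rearranges to $n\,g(\lambda/n)$; being careful with the algebra, $\tfrac{\lambda}{n}\big(\sqrt{n/\lambda}\int_0^{\sqrt{n/\lambda}}e^{-t^2/2}\dd t+e^{-n/(2\lambda)}\big) = (\lambda/n)\,f(n/\lambda) = g(\lambda/n)\cdot\tfrac{1}{?}$, i.e. one checks directly from the two displayed formulas for $g$ that this bracketed expression equals $g(\lambda/n)/(\lambda/n)\cdot(\lambda/n)$, hence $\E S = (1+o(1))\,n\,g(\lambda/n)$ after multiplying by the outer $n$ from $\E S = n\cdot\E\min$. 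Finally, Lemma \ref{lm:conc} with $\varepsilon=\Theta(n^{-1/5})$ gives $S\approx\E S$ with probability $1-O(n^{-99})$; absorbing the $(1+o(1))$ from the expectation and the $(1\pm\varepsilon)$ from concentration into a single $\approx$ yields $M_n\approx$ the stated expressions, on an event of probability $1-O(n^{-90})$ (in fact $1-O(n^{-99})$, but $n^{-90}$ suffices and leaves room for the union bounds with \eqref{Wmax}, \eqref{Cmax}).

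For the $W_{\max}$ and $C_{\max}$ bounds: on the complement of the event $B$ in the proof of Lemma \ref{lm:conc}, every selected edge has $X_i = W_i + \lambda C_i \le M = 10(1+\lambda)\sqrt{\log n/n}$ (here $W_i, C_i$ denote the weight and cost of the edge chosen at vertex $i$), and this holds with probability $1-n^{-99}$ by \eqref{eq:conc-Cmax}. Since $W_i\le X_i$ and $\lambda C_i \le X_i$, we get $W_{\max}\le M = O((1+\lambda)\sqrt{\log n/n})$ and $C_{\max}\le M/\lambda = O((1+1/\lambda)\sqrt{\log n/n})$ — but these raw bounds must be sharpened in the two extreme ranges using the trivial bounds $W_i\le 1$ and $C_i\le 1$. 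In the {\bf E2}/Case 2 range $\lambda\le\log n/n\le 1$, we have $1+\lambda=O(1)$ so $W_{\max}=O(\sqrt{\log n/n})$ directly, while for $C_{\max}$ the bound $M/\lambda$ is useless (it exceeds $1$) so we just use $C_{\max}\le 1$. In the {\bf E4}/Case 3 range $\lambda\ge n/\log n$, symmetrically $W_{\max}\le 1$ (the bound $M$ exceeds $1$) and $C_{\max}\le M/\lambda = O((1+1/\lambda)\sqrt{\log n/n}) = O(\sqrt{\log n/n})$, which I would further note is $O(\log n/n)$ exactly when $\lambda$ is large enough — more carefully, in Case 3 the relevant value of $\lambda$ will satisfy $1/\lambda \le \lambda$, and one checks $M/\lambda \le 10(1/\lambda+1)\sqrt{\log n/n}$; to reach the claimed $O(\log n/n)$ one uses that in the regime of interest $\lambda\gg n/\log n$, so $\sqrt{\log n/n}\cdot(1/\lambda)\ll (\log n/n)$ and also $\sqrt{\log n/n}\le\log n/n$ fails — so here I would instead bound $C_i$ by redoing the tail estimate \eqref{eq:conc-Cmax} for $C$ alone: $\p{\exists i: C_i > 10\log n/n} \le n\,\p{C_{(1,1)}>10\log n/n}^{\,n}$? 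No — the minimizing edge's cost need not be small. The cleaner route, which I would follow, is: the chosen edge has $\lambda C_i \le X_i \le M$, giving $C_i \le M/\lambda = 10(1/\lambda + 1)\sqrt{\log n/n} = O(\sqrt{\log n /n})$; then since in Case 3 we apply this with $\lambda$ of order $n$, $M/\lambda = O(\sqrt{\log n/n}\cdot(1+1/n)) = O(\sqrt{\log n/n})$ and separately $1/\lambda\cdot\sqrt{\log n/n} = O(n^{-1}\sqrt{\log n /n}) = o(\log n/n)$ while the "$+1$" term gives $\sqrt{\log n/n}$, which is NOT $O(\log n/n)$ — so the claimed $O(\log n/n)$ for $C_{\max}$ must come from a separate argument specific to how Corollary \ref{cor1} is invoked with $\lambda = \Theta(n)$, namely that then $M = 10(1+\lambda)\sqrt{\log n/n} = \Theta(n\sqrt{\log n/n}) = \Theta(\sqrt{n\log n})$ and $M/\lambda = \Theta(\sqrt{n\log n}/n) = \Theta(\sqrt{\log n/n})$ still. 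I would therefore either (a) replace the stated $O(\log n/n)$ by $O(\sqrt{\log n/n})$ if that is all that follows, or (b) invoke the finer fact that in Case 3 the optimal multiplier is $\Theta(n)$ and redo \eqref{eq:conc-Cmax} with the sharper observation $\p{X_1 > u} \le \p{C_{(1,1)} > u/(2\lambda)}^n + \p{W_{(1,1)} > u/2}^n$ to separate the two tails. The main obstacle is precisely this last bookkeeping: matching the stated $W_{\max}$, $C_{\max}$ bounds requires plugging in the case-specific order of $\lambda$ rather than using the uniform estimate $M$, and I would devote the bulk of the write-up to that per-case verification while treating the derivation of the three $M_n$ asymptotics as immediate from Lemmas \ref{lm:Emin} and \ref{lm:conc}.
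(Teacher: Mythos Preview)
Your derivation of the three $M_n$ asymptotics and of the $W_{\max}$ bounds (Cases 1--2 via \eqref{eq:conc-Cmax}, Case 3 trivially) matches the paper's proof exactly, as does the $C_{\max}$ bound in Cases 1 and 2. You have also correctly diagnosed the one real difficulty: in Case 3 the inequality $\lambda C_i \le X_i \le M$ yields only $C_{\max}=O(\sqrt{\log n/n})$, not the stated $O(\log n/n)$, and neither of your proposed fixes (a) or (b) recovers the stronger bound.

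The missing idea is an \emph{exchange/optimality} argument, not a tail estimate. With $\lambda \ge n/\log n$, set $p = K\log n/n$ for a large constant $K$. First, w.h.p.\ every vertex $v$ has some out-neighbour $w$ with $C_{(v,w)}\le p$ (the failure probability is at most $n(1-p)^{n-1}=o(1)$). Now suppose the optimal mapping $F$ uses an edge $(v_1,w_1)$ with $C_{(v_1,w_1)}>2p$. Replacing it by $(v_1,w_2)$ with $C_{(v_1,w_2)}\le p$ changes the objective $W(F)+\lambda C(F)$ by at most $+1$ in the $W$-part but by at least $-\lambda p \le -\tfrac{n}{\log n}\cdot K\tfrac{\log n}{n} = -K$ in the $\lambda C$-part, a net strict decrease once $K>1$. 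This contradicts optimality of $F$, so $C_{\max}\le 2p = O(\log n/n)$. Your route (b), separating the tails of $W$ and $C$ inside $\Pr(X_1>u)$, does not work here because the edge achieving $X_i$ is selected by the combined weight $W+\lambda C$, so a small-$C$ tail bound on a \emph{random} edge says nothing about the cost of the \emph{chosen} edge; the optimality of the mapping is what forces the cost to be small.
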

\begin{proof}
The claim about $M_n$ follows directly from Lemma \ref{lm:Emin} and Lemma \ref{lm:conc}.\\
For Cases 1 and 2 the claim about $W_{\max}$ follows from \eqref{eq:conc-Cmax} and the claim for Case 3 is trivial.\\ 
For Case 1 the claim about $C_{\max}$ follows from \eqref{eq:conc-Cmax} and the claim for Case 2 is trivial. For Case 3, we let $p=K\log n/n$ and argue that w.h.p. for each $v\in [n]$, there exists $w\neq v$ such that $C_{(v,w)}\leq p$ (the probability of the contrary is at most $n(1-p)^{n-1}=o(1)$). If $C_{\max}=C_{(v_1,w_1)}> 2p$ then replacing $(v_1,w_1)$ by $(v_1,w_2)$ where $C_{(v_1,w_2)}\leq p$ we reduce the value $W(F)+\l C(F)$ of the supposed mapping $F$, by at least $\l p-1 \geq \frac{n}{\log n} K \frac{\log n}{n} -1 > 0$, contradicting the optimality of $F$.
\end{proof}

\subsection{Properties of optimal dual solutions}\label{Dual}
Let 
\[
I=\set{(i,j)\in [n]^2:i\neq j\text{ and }W_{i,j},C_{i,j}\text{ are bounded by \eqref{Wmax}, \eqref{Cmax} respectively}}.
\]
For $i\in [n]$ we let $J_i=\set{j:(i,j)\in I}$.

We can express the problem MWCM as the following integer program:\\
{\bf $IP_{map}$}
\begin{align}
\text{Minimize }&\sum_{(i,j)\in I}W_{i,j}x_{i,j}\text{ subject to }\nonumber\\
&\sum_{j\in J_i}x_{i,j}=1, i\in [n]\label{IP1}\\
&\sum_{(i,j)\in [I]}C_{i,j}x_{i,j}\leq c_0\label{IP2}\\
&x_{i,j}=0\text{ or }1,\quad\text{ for all }i\neq j.\label{IP3}
\end{align}
We obtain the relaxation $LP_{map}$ by replacing \eqref{IP3} by 
\beq{LP1}{
\text{$0\leq x_{i,j}\leq 1$ for all $(i,j)\in I$.}
}

We will consider the dual problem: we will say that a map $f$ is {\em feasible} if $f(i)\in J_i$ for $i\in [n]$. We let $\Omega^*$ denote the set of feasible $f$.

$Dual_{map}(W,C,c_0)$:
\[
\text{Compute }\max_{\l\geq 0}\phi_{{\red map}}(\l,c_0)\text{ where }
\f_{\red map}(\l,c_0)=\min_{f\in\Omega^*}\set{\sum_{i\in [n]}(W_{i,f(i)}+\l C_{i,f(i)})-\l c_0}.
\]
Now it is well known (see for example \cite{NW}) that 
\[
\max_{\l\geq 0}\phi_{\red map}(\l,c_0)=\min\set{\sum_{(i,j)\in I}W_{i,j}x_{i,j}\text{ subject to } \eqref{IP1}, \eqref{IP2}, \eqref{LP1}}.
\]
I.e. maximising $\f_{\red map}$ solves the linear program $LP_{map}$. The basic feasible solutions to the linear program $LP_{map}$ have a rather simple structure. {\red A basis matrix is obtained by replacing a single row of the $n\times n$ identity matrix $I_n$ with coefficients from the LHS of \eqref{IP2} (or it is $I_{n+1}$). This is because there are exactly $n+1$ basic variables. We either (i) have a single $i^*$ such that (a) $i\neq i^*$ implies that there is a unique $j(i)$ such that $x_{i,j(i)}=1$ and $x_{i,j}=0$ for $j\neq j(i)$ and (b) there are two indices $j_1,j_2$ such that $x_{i^*,j_\ell}\neq 0,\ell=1,2$ or (ii) for each  
$i$ there is a basic $x_{i,j(i)}$ and the $(n+1)$th basic variable is the slack in \eqref{IP2}.} 

We are using Corollary \ref{cor1} to restrict ourselves to feasible $f$, so that we may use the upper bounds in \eqref{Wmax}, \eqref{Cmax}.

Consider the unique (with probability one) basic feasible solution that solves {\red $LP_{map}$}. The optimal shadow price $\l^*$ is also the optimal solution to the dual problem $DUAL_{map}(W,C,c_0)$. Let the map $f^*=f^*(c_0)$ be obtained from an optimal basic feasible solution to $LP_{map}$ by (i) putting $x_{i^*,j_1}=x_{i^*,j_2}=0$ and then (ii) choosing $j^*$ to minimise $C_{i^*,j}+\l^*W_{i^*,j}$ and then putting $x_{i^*,j^*}=1$. This yields the map $f^*$, where $f^*(i)=j(i),i\neq i^*$ and $f^*(i^*)=j^*$.

Let $W_{\max}=\max\set{W_{i,f^*(i)}:i\in [n]}$ and define $C_{\max}$ similarly.  Let $W^*_{LP}$ denote the optimal objective value to $LP_{map}$. Then we clearly have
\beq{clearly}{
W(f^*)\leq W^*_{LP}+W_{\max}\text{ and }C(f^*)\leq c_0+C_{\max}.
}

\begin{lemma}\label{lm:opt-is-unif}
Let $W_{(i,j)}$ and $C_{(i,j)}$, $i, j \leq n$, be i.i.d. random variables on $[0,1]$. Then $f^*$ is distributed as a random mapping.
\end{lemma}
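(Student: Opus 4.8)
$f^*$ is distributed as a random mapping, i.e., as a uniformly random function $f:[n]\to[n]$ with $f(i)\neq i$ (equivalently, for each $i$ the value $f^*(i)$ is uniform on $[n]\setminus\{i\}$, independently over $i$).

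The plan is to exploit the symmetry of the i.i.d.\ weight/cost data under permutations of the "head" labels at each vertex, together with the fact that the construction of $f^*$ from the data only ever refers to the \emph{relative ordering} of the quantities $W_{i,j}+\mu\, C_{i,j}$ (for various multipliers $\mu$) among the $j\in[n]\setminus\{i\}$. First I would recall precisely how $f^*$ is produced: one solves $LP_{map}$, reads off the optimal shadow price $\l^*$, and then for each $i\neq i^*$ sets $f^*(i)=j(i)$ where $j(i)$ is the unique index with $x_{i,j(i)}=1$ in the optimal basic feasible solution; for the exceptional vertex $i^*$ one sets $f^*(i^*)=\arg\min_j\{C_{i^*,j}+\l^* W_{i^*,j}\}$. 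The key structural observation (already noted in the excerpt via \eqref{eq:phi_m}) is that, for a fixed multiplier $\l$, the minimizing map in the Lagrangean dual is obtained coordinatewise: $f_\l(i)=\arg\min_{j\in J_i}\{W_{i,j}+\l C_{i,j}\}$. Since $\l^*$ is itself determined by the whole collection of data, I need an argument that is robust to this dependence.

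The cleanest route is a symmetrization argument. Fix a vertex $v$ and an arbitrary permutation $\pi$ of the set $[n]\setminus\{v\}$ (the possible heads of edges out of $v$); leave the data at all other vertices untouched, and replace $(W_{v,j},C_{v,j})_{j}$ by $(W_{v,\pi(j)},C_{v,\pi(j)})_{j}$. Because the family $\{W_e,C_e\}$ is i.i.d., this relabelling preserves the joint distribution of the entire data set. I would then check that this relabelling commutes with the whole pipeline producing $f^*$: the feasible region of $LP_{map}$ is invariant (constraint \eqref{IP1} at $v$ is just a renaming of variables, \eqref{IP2} and the objective are symmetric sums), so the optimal value, the optimal shadow price $\l^*$, and the combinatorial type of the optimal basis are unchanged, while the optimal assignment at $v$ transforms by $j(v)\mapsto \pi^{-1}(j(v))$ — and likewise the tie-break rule for $i^*$ (if $i^*=v$) transforms equivariantly since $\arg\min$ commutes with relabelling. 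Hence for every such $\pi$, the law of $f^*(v)$ (conditioned appropriately, or marginally) is invariant under $j\mapsto\pi^{-1}(j)$ on $[n]\setminus\{v\}$; as this holds for \emph{all} $\pi$, $f^*(v)$ is uniform on $[n]\setminus\{v\}$. Running the same argument simultaneously with independent relabellings at each vertex (the data blocks at distinct vertices are disjoint and independent, so a product of such relabellings still preserves the joint law) shows that the vector $(f^*(1),\dots,f^*(n))$ is invariant under the full product group $\prod_i \mathrm{Sym}([n]\setminus\{i\})$ acting on the target, which forces it to be uniform on $\prod_i([n]\setminus\{i\})$ — i.e., $f^*$ is a random mapping.

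One technical point to handle carefully, and the step I expect to be the main obstacle, is uniqueness/measurability of $f^*$: the argument needs $f^*$ to be an (almost surely) well-defined deterministic function of the data, so that "relabel the data, then apply the map" and "apply the map, then relabel the output" genuinely coincide. With probability one the $LP_{map}$ optimum is a unique non-degenerate basic feasible solution (stated in the excerpt just before the lemma), the shadow price $\l^*$ is unique, and all the relevant $\arg\min$'s are attained uniquely (the defining linear combinations are continuous in the data and ties occur with probability zero), so this holds on a full-measure event that is itself permutation-invariant. I would also note in passing that the restriction to the index sets $J_i$ coming from $I$ (the "feasible" edges of Corollary~\ref{cor1}) is preserved, since $I$ is defined by symmetric threshold conditions on $(W_e,C_e)$; strictly speaking one can even bypass this by observing that $f^*$ as defined is a deterministic function of the data regardless, and the symmetrization does not care about the auxiliary conditioning. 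Assembling these observations gives the claim with only routine verifications.
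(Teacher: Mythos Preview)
Your argument is correct and is in spirit the same symmetry/equivariance idea as the paper's, but you execute it more carefully, and in a way that actually closes a gap in the paper's three-line proof. The paper argues via invariance under a single global permutation $\pi$ of $[n]$ and deduces $\Pr(f^*=f_0)=\Pr(f^*=\pi\circ f_0)$; taken literally, the action $f_0\mapsto\pi\circ f_0$ is \emph{not} transitive on $\prod_i([n]\setminus\{i\})$ (it preserves the kernel partition $\{f_0^{-1}(j)\}_j$, so e.g.\ injective and non-injective $f_0$ lie in different orbits), and hence does not by itself force uniformity. Your choice of the full product group $\prod_i\mathrm{Sym}([n]\setminus\{i\})$, permuting the heads of the out-edges at each vertex independently, is the right symmetry: it acts transitively on the target space, and the out-edge data blocks at distinct vertices are disjoint, so the i.i.d.\ hypothesis is exactly what makes the joint law invariant. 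Your verification that the whole $LP_{map}$ pipeline (optimal basis, shadow price $\l^*$, the special index $i^*$, and the $\arg\min$ tie-break at $i^*$) is equivariant under this action is precisely what is needed, and your remarks on almost-sure uniqueness of the optimum and on the threshold sets $J_i$ transforming equivariantly are to the point.
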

\begin{proof}
Fix $f_0 \in [n]^{[n]}$ and a permutation $\pi$ of $[n]$. The distribution of $f^*$ is invariant with respect to relabelling (permuting) the domain $[n]$, that is $\pi\circ f^*$ and $f^*$ have the same distribution. Therefore,
\[
\p{f^* = f_0} = \p{\pi\circ f^* = \pi\circ f_0} = \p{f^* = \pi\circ f_0}.
\] 
\end{proof}
{\red The importance of the above lemma stms from the fact that it implies that w.h.p. $D_{f^*}$ has $O(\log n)$ components and as such we only need to change $O(\log n)$ edges to make it an arborescence. }
\subsection{Discretisation}\label{discrete}
We divide the interval $[0,n\log n]$ into $n^{10}$ intervals $[\lambda_i,\lambda_{i+1}]$ of equal length. Then $|\lambda_{i+1}-\lambda_i| \leq {\red n^{-8}}$. By standard arguments we have the following claim about the maximum after the discretisation. 

\begin{lemma}\label{cl1}
Almost surely, we have
\begin{equation}\label{eq:max-phi_m-discrete}
\max_\lambda \phi_{map}(\lambda,c_0) = \max_{i \leq n^{10}} \phi_{map}(\lambda_i,c_0) + O({\red n^{-7}}).
\end{equation}
\end{lemma}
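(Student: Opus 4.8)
The plan is to show that $\phi_{map}(\cdot,c_0)$, as a function of $\lambda$ on $[0,n\log n]$, is Lipschitz with a polynomially bounded constant (on the relevant probability-one event), and then the claim follows from the fact that a Lipschitz function on a union of intervals of length $\le n^{-8}$ attains its maximum within $O(n^{-8}\cdot L)$ of the maximum over the endpoints $\lambda_i$. So the substance is a Lipschitz estimate for $\phi_{map}$.

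First I would recall from the definition that
\[
\phi_{map}(\lambda,c_0) = \min_{f\in\Omega^*}\Big\{\sum_{i\in[n]}(W_{i,f(i)}+\lambda C_{i,f(i)})\Big\} - \lambda c_0,
\]
i.e.\ $\phi_{map}(\lambda,c_0) = S(\lambda) - \lambda c_0$ where $S(\lambda) = \sum_{i\le n}\min_{j\in J_i}\{W_{i,j}+\lambda C_{i,j}\}$. A minimum of affine functions of $\lambda$ is concave and piecewise linear, so $S$ is concave in $\lambda$; its slope at any point where it is differentiable equals $\sum_{i} C_{i,f_\lambda(i)}$ where $f_\lambda$ is the minimiser, hence lies in $[0, \sum_i C_{\max}] \le [0,n]$ since all costs are in $[0,1]$. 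Consequently $S$ is $n$-Lipschitz, and $\phi_{map}(\cdot,c_0) = S(\lambda)-\lambda c_0$ is $(n+c_0)$-Lipschitz. Since we only ever consider $c_0$ in a range where $c_0 = O(n)$ (indeed $c_0\le n/\omega$ or $c_0 = O(n)$ in all cases of Theorem \ref{thm:main-res}, and $c_0 \ll n/\log n$ for Theorem \ref{thm:main-resx}), the Lipschitz constant is $O(n)$.

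Now partition $[0,n\log n]$ into the $n^{10}$ intervals $[\lambda_i,\lambda_{i+1}]$, each of length at most $n^{-8}$ (here $n\log n / n^{10} \le n^{-8}$ for large $n$). Let $\lambda^\star$ be a point where $\phi_{map}(\cdot,c_0)$ attains its maximum over $[0,n\log n]$; this exists because $\phi_{map}(\cdot,c_0)$ is continuous (being piecewise affine) on a compact interval. Pick $i$ with $\lambda^\star\in[\lambda_i,\lambda_{i+1}]$; then by the Lipschitz bound,
\[
\max_\lambda \phi_{map}(\lambda,c_0) = \phi_{map}(\lambda^\star,c_0) \le \phi_{map}(\lambda_i,c_0) + O(n)\cdot n^{-8} = \phi_{map}(\lambda_i,c_0) + O(n^{-7}),
\]
and trivially $\phi_{map}(\lambda_i,c_0)\le \max_{j\le n^{10}}\phi_{map}(\lambda_j,c_0)\le \max_\lambda\phi_{map}(\lambda,c_0)$. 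Combining the two inequalities gives \eqref{eq:max-phi_m-discrete}.

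The only subtlety — and hence the one place I would be careful rather than invoke "standard arguments" — is making sure the Lipschitz constant is genuinely polynomial in $n$ uniformly over the ranges of $\lambda$ and $c_0$ under consideration; this is exactly where the bound "all weights and costs lie in $[0,1]$" and "$c_0 = O(n)$" are used. There is nothing probabilistic here: the estimate holds surely (it is a deterministic statement about a min of affine functions, valid whatever the realised values of the $W_{i,j},C_{i,j}$), which is why the lemma is stated "almost surely" and can in fact be upgraded to "surely" if one wishes. No concentration input is needed for this step.
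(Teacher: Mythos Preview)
Your proof is correct and follows essentially the same approach as the paper: both establish that $\phi_{map}(\cdot,c_0)$ is $O(n)$-Lipschitz on $[0,n\log n]$ (using that all costs lie in $[0,1]$ and $c_0=O(n)$) and then combine this with the mesh size $\le n^{-8}$ to get the $O(n^{-7})$ error. The paper writes out the Lipschitz bound via the swap-of-optima argument, while you phrase it through the derivative of the concave piecewise-linear function $S(\lambda)$, but these are the same computation.
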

\begin{proof}
This follows from a standard argument: we have
\[
| \max_\lambda \phi_{map}(\lambda,c_0) - \max_{i \leq n^{10}} \phi_{map}(\lambda_i,c_0) | \leq \max_{i \leq  n^{10}} \max_{\lambda \in [\lambda_i,\lambda_{i+1}]} |\phi_{map}(\lambda,c_0) -  \phi_{map}(\lambda_i,c_0) |
\]
and for any $\lambda, \lambda'$
\[
|\phi_{map}(\lambda,c_0) -  \phi_{map}(\lambda',c_0) | \leq |\min_f \sum_{e=(v,f(v)) } (W_e + \lambda C_e) - \min_{f} \sum_{e=(v,f(v))}(W_e + \lambda' C_e)| + |\lambda - \lambda'|c_0.
\]
If we take $\tilde{f}$ to be an optimal mapping for $\lambda$ and $\tilde{f}'$ for $\lambda'$, we can conclude that
\[
\min_f \sum_{e=(v,f(v))} (W_e + \lambda C_e) \leq \sum_{e=(v,\tilde f'(v))} (W_e + \lambda C_e) \leq \min_{f'} \sum_{e=(v,f '(v))}(W_e + \lambda' C_e) + \max_f\sum_{e =(v, f (v))} |\lambda-\lambda'|C_e
\]
which easily gives (by estimating each $C_e$ by $1$ and exchanging the roles of $\lambda$ and $\lambda'$)
\[
|\min_T \sum_{e \in T} (W_e + \lambda C_e) - \min_{F'} \sum_{e \in F'}(W_e + \lambda' C_e)| \leq |\lambda-\lambda'|n.
\]
Since $c_0 = O(n)$ and $|\lambda - \lambda_i| \leq {\red n^{-8}}$, we finish the argument. 

\end{proof}
The function $\f_{map}(\l,c_0)$ is concave and will be strictly concave with probability one. Let $\l^*$ denote the value of $\l$ maximising $\f$ and let $\l^{**}$ be the closest discretised value to $\l^*$. Let $f^{**}$ be the mapping that minimises $W(f)+\l^{**}C(f)$. We will see in the following that 
\beq{lowerl}{
\l^*\geq \frac{1}{n^2}\text{ w.h.p.}
}
{\red The asymptotic values of $\l^*$ are derived in Section \ref{nopt}.}
\begin{lemma}\label{hamm}
Assuming \eqref{lowerl}, then
\[
f^*=f^{**}\text{ w.h.p.}
\]
\end{lemma}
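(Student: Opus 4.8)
The plan is to show that the small perturbation to $\l^*$ caused by discretisation does not change which edge is optimal at any vertex, so the minimising mapping is unchanged. Recall $f^*$ minimises $\sum_i (W_{i,f(i)} + \l^* C_{i,f(i)})$ over feasible $f$ and $f^{**}$ minimises the same sum with $\l^*$ replaced by $\l^{**}$, where $|\l^* - \l^{**}| \le n^{-8}$ by the discretisation in Section~\ref{discrete}. Since these are separable minimisations, it suffices to show that for every vertex $i$, the edge $(i,j)$ minimising $W_{i,j} + \l^* C_{i,j}$ over $j \in J_i$ is the same as the one minimising $W_{i,j} + \l^{**} C_{i,j}$.

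First I would fix a vertex $i$ and let $j_1, j_2 \in J_i$ be two distinct candidate out-edges. The edge $j_1$ beats $j_2$ at parameter $\l$ precisely when $W_{i,j_1} + \l C_{i,j_1} < W_{i,j_2} + \l C_{i,j_2}$, i.e. the crossover happens at $\l = (W_{i,j_2} - W_{i,j_1})/(C_{i,j_1} - C_{i,j_2})$. The key probabilistic input is an anti-concentration estimate: because the $W$'s and $C$'s are independent uniform (or, in the $s<1$ case, have bounded densities on the relevant scale), the difference $|(W_{i,j_1} + \l^* C_{i,j_1}) - (W_{i,j_2} + \l^* C_{i,j_2})|$ is unlikely to be extremely small. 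Concretely I would argue that for any fixed pair the probability this gap is below $n^{-c}$ (for a suitable constant $c$, say $c = 6$) is $O(n^{-c})$ times a factor accounting for the magnitude of $\l^*$; a union bound over the $O(n^3)$ pairs $(i,j_1,j_2)$ then shows that w.h.p. every such gap exceeds, say, $n^{-2}$.

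Next I would convert this gap into robustness under the $\l^*\to\l^{**}$ perturbation. Changing $\l$ by $\delta := |\l^* - \l^{**}| \le n^{-8}$ changes each quantity $W_{i,j} + \l C_{i,j}$ by at most $\delta C_{i,j} \le \delta$. Hence if at $\l^*$ the minimising edge at vertex $i$ beats every competitor by more than $2\delta$ — which the previous step guarantees w.h.p., since $n^{-2} \gg 2 n^{-8}$ — then the same edge still minimises at $\l^{**}$. Doing this simultaneously for all $i$ gives $f^* = f^{**}$ w.h.p. One subtlety is that $f^*$ is itself defined (in Section~\ref{Dual}) via a tie-breaking step at the special index $i^*$, but that step also just picks the $C_{i^*,j}+\l^* W_{i^*,j}$-minimiser (equivalently, for $\l^*>0$, the $W_{i^*,j}+\l^* C_{i^*,j}$-minimiser among the remaining $j$), so it is covered by the same argument; the hypothesis \eqref{lowerl} that $\l^* \ge n^{-2}$ is what ensures $\l^*$ is bounded away from $0$ so that the two orderings coincide and the crossover $\l$-values stay in a controlled range.

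The main obstacle I anticipate is making the anti-concentration bound uniform over the (random, data-dependent) value of $\l^*$: one cannot simply condition on $\l^*$ since it depends on all the weights and costs. The cleanest way around this is to prove the gap bound for \emph{all} $\l$ in a deterministic net of $[n^{-2}, n\log n]$ simultaneously — for each fixed $\l$ in the net the gap bound and union bound go through, and there are only polynomially many net points — and then note that moving $\l$ within a net cell changes all the relevant quantities by $o(n^{-2})$, so the ordering of edges is constant on each cell; since $\l^*$ lies in some cell, the bound applies. Everything else is a routine union bound plus the observation $n^{-8} \ll n^{-2}$.
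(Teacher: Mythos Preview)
Your approach is essentially the paper's: show that for each $i$ the $\mathrm{argmin}_j\{W_{i,j}+\l C_{i,j}\}$ is unchanged when $\l$ moves from $\l^*$ to $\l^{**}$, by combining an anti-concentration bound for the gaps $Z_{i,j_1}-Z_{i,j_2}$ with a union bound over $(i,j_1,j_2)$ and the fact that $|\l^*-\l^{**}|\le n^{-8}$. The paper does exactly this, using $\Pr(|X-Y+\l(U-V)|\le\delta)\le\delta/\l$ and concluding $\Pr(\exists i,j_1,j_2:|Z^*_{i,j_1}-Z^*_{i,j_2}|\le 2n^{-8})\le n^3\cdot 2/(\l^*n^8)=o(1)$ under \eqref{lowerl}.

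Two remarks. First, you are right that applying the anti-concentration bound at the \emph{random} $\l^*$ is delicate; the paper simply writes the inequality at $\l^*$ without further comment, while your net argument makes this rigorous. (An alternative is to observe that conditioning on the $C$'s and using the bounded density of $W_{i,j_1}-W_{i,j_2}$ gives $\Pr(|Z_{i,j_1}-Z_{i,j_2}|\le\delta)\le 2\delta$ uniformly in $\l$, which removes the $\l$-dependence altogether.) Second, your parenthetical that minimising $C_{i^*,j}+\l^*W_{i^*,j}$ is ``equivalently, for $\l^*>0$, the $W_{i^*,j}+\l^*C_{i^*,j}$-minimiser'' is not correct in general (dividing by $\l^*$ gives $W+(1/\l^*)C$, not $W+\l^*C$); the paper's own proof sidesteps this by reducing directly to the statement $\mathrm{argmin}_j\{W_{i,j}+\l^*C_{i,j}\}=\mathrm{argmin}_j\{W_{i,j}+\l^{**}C_{i,j}\}$ for all $i$, so the tie-break at $i^*$ does not actually enter.
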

\begin{proof}
{\red From the above, we have to show that 
\beq{arg}{
\mathrm{argmin}_j\set{W_{i,j}+\l^*C_{i,j}}=\mathrm{argmin}_j\set{W_{i,j}+\l^{**}C_{i,j}}\text{ for }i\in [n].
}
Now $|\l^{**}-\l^*|\leq {\red n^{-8}}$ and so $|Z_{i,j}^*-Z_{i,j}^{**}|\leq n^{-8}$ where $Z_{i,j}^*=W_{i,j}+\l^*C_{i,j}$ etc.  Furthermore, if $U, V, X,Y$ are independent uniform $[0,1]$ random variables then $\Pr(|X-Y+\l(U-V)| \leq \delta)\leq \delta/\l$ for any choice of $\delta,\l > 0$. Thus
\beq{arg1}{
\Pr\brac{\exists i,j_1,j_2: j_1 \neq j_2, \ |Z^*_{i,{\red j_1}} - Z^{*}_{i,j_2}| \leq \frac{2}{n^8}}
\leq n^3\cdot\bfrac{2}{\l^*{\red n^8}}=o(1),
}
under the assumption that $(\l^*)n^5\to\infty$. 

Now switching from $\l^*$ to $\l^{**}$ can only change a $Z_{i,j}$ by at most $n^{-8}$ and \eqref{arg1} implies that this is not enough to affect which is the smallest $Z_{i,j}^{**}$, for a fixed $i$.
}
\end{proof}
\subsection{Cycles}\label{cycles}
A mapping $f$ gives rise to a digraph $D_F=([n],\set{(v,f(v)):v\in [n]}$. The digraph $D_F$ splits into components consisting of directed cycles plus arborescences attached to these cycles.
\begin{lemma}\label{lm:cyc}
There is a universal constant $K$ such that a uniform random mapping $F: [n]\to [n]$ has at most $K\log n$ cycles with probability at least $1 - O(n^{-50})$.
\end{lemma}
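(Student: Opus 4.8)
The plan is to reduce the claim to the classical concentration of the number of cycles of a uniform random permutation. Recall that the functional digraph $D_F$ splits into components, each being one directed cycle with rooted trees attached, so the number of cycles of $F$ equals the number of components of $D_F$. Let $S=S(F)\subseteq[n]$ be the set of cyclic points, i.e. those $v$ with $F^k(v)=v$ for some $k\geq 1$. Then $F$ restricted to $S$ is a permutation of $S$, and every cycle of $F$ is a cycle of this permutation; hence it suffices to bound the number of cycles of $F|_S$.

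The key input is the standard fact that, conditionally on $\{|S|=m\}$, the permutation $F|_S$ is uniformly distributed over all $m!$ permutations of the $m$-set $S$ (see e.g. Kolchin's book on random mappings, or Flajolet--Sedgewick). It also follows from a short counting argument: for a fixed $m$-set $T$ and a fixed permutation $\sigma$ of $T$, the number of $F:[n]\to[n]$ with $S(F)=T$ and $F|_T=\sigma$ equals the number of functions $g:[n]\setminus T\to[n]$ such that iterating $g$ (and then $F$) from any point of $[n]\setminus T$ eventually lands in $T$, i.e. the number of ``forests rooted at $T$'' on $[n]\setminus T$; this count depends only on $n$ and $m$, not on $\sigma$. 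Consequently, conditionally on $\{|S|=m\}$ the number of cycles of $F$ is distributed as the number of cycles $C_m$ of a uniform random permutation of $[m]$.

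By the Feller (records) representation, $C_m\stackrel{d}{=}\sum_{i=1}^m B_i$ with $B_i$ independent, $B_i\sim\mathrm{Bernoulli}(1/i)$; in particular $\E C_m=H_m\leq 1+\log m$ and $C_m$ is stochastically increasing in $m$. Since $|S|\leq n$ always,
\[
\Pr\big(\#\text{cycles}(F)\geq K\log n\big)=\E\Big[\Pr\big(C_{|S|}\geq K\log n\,\big|\,|S|\big)\Big]\leq \Pr\Big(\textstyle\sum_{i=1}^n B_i\geq K\log n\Big).
\]
A Chernoff bound for sums of independent $\{0,1\}$ variables gives $\Pr(\sum_{i=1}^n B_i\geq a)\leq (e H_n/a)^a$ for $a\geq H_n$; taking $a=K\log n$ and $H_n\leq 2\log n$ (for $n$ large) yields a bound $(2e/K)^{K\log n}=n^{-K\log(K/(2e))}$, which is $O(n^{-50})$ once $K$ is a large enough absolute constant (e.g. $K=100$ already gives $n^{-291}$). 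This proves the lemma.

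There is no serious obstacle here; the only point needing care is the reduction itself, namely verifying (or citing precisely) that the cyclic points of a uniform random mapping carry a uniform random permutation, so that $\#\text{cycles}(F)$ is stochastically dominated by the number of cycles of a uniform permutation of $[n]$. Once that is in place, the tail bound is a routine large-deviation estimate.
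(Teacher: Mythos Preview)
Your proof is correct and shares with the paper the same key reduction: conditionally on the set of cyclic points, the restriction of $F$ is a uniform random permutation, so it suffices to control the number of cycles of a uniform permutation of $[m]$ with $m\le n$. The two arguments diverge only in how they obtain the tail bound for that permutation statistic. You invoke the Feller (records) decomposition $C_m\stackrel{d}{=}\sum_{i=1}^m B_i$ with independent $B_i\sim\mathrm{Bernoulli}(1/i)$, use stochastic monotonicity in $m$ to pass to $m=n$, and apply the multiplicative Chernoff bound $\Pr(\sum B_i\ge a)\le (eH_n/a)^a$. The paper instead uses a halving argument: the cycle containing a fixed point of a uniform permutation of $[m]$ has size uniform on $\{1,\dots,m\}$, so with probability $1/2$ it contains at least half the remaining points; iterating this dominates the cycle count by the number of failures before $\lceil\log_2 m\rceil$ successes in fair coin flips, and Chernoff for $\mathrm{Bin}(t,1/2)$ finishes. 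Your route is the textbook one and slightly cleaner numerically; the paper's is more self-contained (no external representation needs to be cited) and gives an explicit exponent $(K-2)^2/(2K)$. Either way the conclusion and the required constant $K$ are of the same order.
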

\begin{proof}
If we condition on the set $C$ of vertices on cycles, then the cycles define a random permutation of the elements of $C$. One can see this by observing that if we remove the edges from these cycles and replace them with another collection of cycles that cover $C$ then we get another digraph of a mapping. This explains that each set of cycles that covers $C$ has the same set of extensions to a mapping digraph i.e. arises in the same number of mappings.

Let $C=[m]$. Let $\pi$ be a random permutation of $[m]$. Let $X$ denote the size of the cycle containing 1. Then
\[
\p{X=i}=\frac{(m-1)(m-2)\cdots(m-i+1)\times (m-i)!}{m!}=\frac{1}{m}.
\]
{\bf Explanation:} The factor $(m-1)(m-2)\cdots(m-i+1)$ is the number of ways of completing the cycle containing 1 and $(m-i)!$ is the number of ways of computing the vertices not on $C$.

Now let $Y$ denote the number of cycles in $\pi$. From this we can argue that 
\[
\p{Y\geq t}\leq \p{Bin(t,1/2)\leq \lceil \log_2m\rceil}.
\]
{\bf Explanation:} We flip a sequence of fair coins. If we get a head in the first one, then we interpret this as vertex 1 being on a cycle $C_1$ of size at least $m/2$ and then we continue the experiment with $[m]\setminus C_1$. If we get a tail, then we continue the experiment with $[m]$.

So, by the Chernoff bounds, if $Z$ is the number of cycles in a random mapping, then for $K\geq 2$,
\begin{align*}
&\p{Z\geq K\log_2n)}\leq \p{Bin(K\log_2n,1/2)\leq \lceil\log_2n\rceil} \\
&\qquad\qquad\leq \exp\left\{-\frac{(K-2)^2}{2K^2}\cdot A\log_2n\right\} =n^{-(K-2)^2/2K}.
\end{align*}
\end{proof}
\section{Proof of Theorem \ref{thm:main-res}}
It will be convenient to first argue about the cost of an optimal mapping and then amend it to obtain an \emph{almost} optimal arborescence with the (asymptotically) correct cost. Namely, we define $W_{map}^*(c_0)$ to be the optimal value of the integer program $IP_{map}$ of Section \ref{Dual}.

First, we show that with high probability
\begin{equation}\label{eq:mapping}
W_{map}^*(c_0) \approx \begin{cases}\frac{\pi n}{8c_0}.&\text{Case 1.}\\f(\b)-\a\b\text{ where }f'(\b)=\a&\text{Case 2}.\\(g(\b)-\a\b)n\text{ where }g'(\b)=\a&\text{Case 3}.\end{cases}
\end{equation}
and then we modify an \emph{almost} optimal mapping (with the slightly more restricted budget $c_0-\delta$ for the cost) to obtain an arborescence $A$ which with high probability will satisfy $W(A) \approx W_{map}^*(c_0)$ as well as the cost constraint $C(A) = \sum_{e \in A} C_e \leq c_0$. Since 
\[
W^*_{arb}(c_0) \geq W_{map}^*(c_0) \approx W(A) \geq W_{arb}^*(c_0),
\] 
this will show that $W_{arb}^*(c_0) \approx \frac{\pi n}{8c_0}$ in Case 1., etc., as desired.
\subsection{A near optimal mapping}\label{nopt}
Our goal is to show \eqref{eq:mapping}. By weak duality or the fact that $LP_{map}$ relaxes $IP_{map}$ we have
\begin{equation}\label{eq:mapping-dual-low-bd}
W_{map}^*(c_0) \geq \max_{\lambda} \phi_{map}(\lambda,c_0).
\end{equation}

To handle $\phi_{map}$, note that the minimum  over the mappings is of course attained by choosing the best edge for each vertex, that is
\begin{equation}\label{eq:phi_m}
\phi_{map}(\lambda, c_0) = \sum_{i \leq n} \min_{j\neq i} \{W_{(i,j)} + \lambda C_{(i,j)}\} - \lambda c_0.
\end{equation}
Now the analysis splits into three cases according to the value of $c_0$.

\bigskip
{\bf Case 1:} $c_0 \in [\om,n/\om]$.

First we take the maximum over $i$ {\red (the index for the discretization)}. The function $(1+o(1))\sqrt{\frac{\pi}{2}}\sqrt{\lambda n} - \lambda c_0$ is strictly concave and has a global maximum at $\lambda^* = (1+o(1))\frac{\pi n}{8c_0^2}$, satisfying \eqref{lowerl}. Note that with our assumption on $c_0$, this value of $\l$ is in the third {\red or the fourth} range of Lemma \ref{lm:Emin}. 

By \eqref{eq:Emin-lam-small} and the concentration result of Lemma \ref{lm:conc} applied to $\varepsilon = n^{-1/5}$, we have 
\begin{lemma}\label{claim1}
\[
\phi_{map}(\lambda_i,c_0) = (1+o(1))\sqrt{\frac{\pi}{2}}\sqrt{\lambda_i n} - \lambda_ic_0,
\]
for every $i \leq n^{10}$ with probability at least $1 - O(n^{-{\red 89}})$.
\end{lemma}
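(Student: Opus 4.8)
The plan is to combine the pointwise expectation estimate in \eqref{eq:Emin-lam-small} with the concentration bound \eqref{eq:conc} and a union bound over the $n^{10}$ discretized values $\lambda_i$. First I would fix one index $i\le n^{10}$ and recall from \eqref{eq:phi_m} that $\phi_{map}(\lambda_i,c_0)=S_i-\lambda_i c_0$, where $S_i=\sum_{j\le n}\min_{k\neq j}\{W_{(j,k)}+\lambda_i C_{(j,k)}\}$ is exactly the random variable $S$ of Lemma \ref{lm:conc} with $\lambda=\lambda_i$. Since $c_0\in[\om,n/\om]$ forces the optimal $\lambda^*=(1+o(1))\tfrac{\pi n}{8c_0^2}$ to lie in $[\tfrac{\log n}{n},\tfrac{n}{\log n}]$ (the third range of Lemma \ref{lm:Emin}), and for the maximum over $i$ we only care about $\lambda_i$ near $\lambda^*$, we may take $\lambda_i$ in the third (or fourth) range, so \eqref{eq:Emin-lam-small} gives $\E S_i=(1+o(1))n\sqrt{\tfrac{\pi}{2}}\sqrt{\tfrac{\lambda_i}{n}}=(1+o(1))\sqrt{\tfrac{\pi}{2}}\sqrt{\lambda_i n}$.

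Next I would apply Lemma \ref{lm:conc} with $\varepsilon=\Theta(n^{-1/5})$: since $\lambda_i\le n\log n$, \eqref{eq:conc} yields $\Pr(|S_i-\E S_i|>\varepsilon\E S_i)=O(n^{-99})$. On the complementary event $S_i=(1+o(1))\E S_i=(1+o(1))\sqrt{\tfrac{\pi}{2}}\sqrt{\lambda_i n}$, and hence $\phi_{map}(\lambda_i,c_0)=(1+o(1))\sqrt{\tfrac{\pi}{2}}\sqrt{\lambda_i n}-\lambda_i c_0$, which is the claimed formula. A union bound over the $n^{10}$ values of $i$ then gives a failure probability of $O(n^{10}\cdot n^{-99})=O(n^{-89})$, as stated. (One small point: for those $\lambda_i$ with $\lambda_i<\tfrac{\log n}{n}$, formula \eqref{eq:Emin-lam-small} need not hold, but those indices contribute $\lambda_i\to 0$ and their $\phi_{map}$ values are irrelevant to the eventual maximum; alternatively one restricts the claim, as the surrounding text does, to indices with $\lambda_i$ in the relevant range, which is all that is used in computing $\max_i\phi_{map}(\lambda_i,c_0)$.)

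The only genuine content is verifying that the hypotheses of Lemma \ref{lm:conc} apply uniformly: we need $\lambda_i\in[0,n\log n]$ for every $i$, which holds by construction of the discretization of $[0,n\log n]$, and we need the error term $\varepsilon=\Theta(n^{-1/5})$ to beat $1-O(n^{-89})$ after the union bound, which it does since $99-10=89$. I do not expect any real obstacle here — this is a routine ``pointwise estimate plus concentration plus union bound'' argument; the substantive work (the expectation computation and the concentration inequality) has already been done in Lemmas \ref{lm:Emin} and \ref{lm:conc}. The one thing to be careful about is bookkeeping the $(1+o(1))$ factors: the $(1+o(1))$ from \eqref{eq:Emin-lam-small} and the multiplicative $(1\pm\varepsilon)=(1+o(1))$ from concentration compose to a single $(1+o(1))$, and since $\lambda_i c_0$ is deterministic this does not interact badly with the subtracted term.
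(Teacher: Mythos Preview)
Your proposal is correct and matches the paper's own reasoning: the paper does not give a separate proof of this lemma but simply states it as an immediate consequence of \eqref{eq:Emin-lam-small} together with the concentration bound of Lemma~\ref{lm:conc} applied with $\varepsilon=n^{-1/5}$, which is exactly the expectation-plus-concentration-plus-union-bound argument you outline. Your parenthetical remark about the irrelevant small-$\lambda_i$ indices is a fair clarification of a point the paper leaves implicit.
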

Thus the optimal value over $\lambda = \lambda_i$, $i \leq {\red n^{10}}$, is
\begin{align*}
\max_{i \leq n^{10}} \phi_{map}(\lambda_i,c_0) &= (1+o(1))\sqrt{\frac{\pi}{2}}\sqrt{(\lambda^* +O(n^{-8}))n} - (\lambda^* + O({\red n^{-8}})) c_0 \\
&=(1+o(1))\frac{\pi}{8}\frac{n}{c_0}
\end{align*}
which together with {\red Lemma \ref{cl1}} gives that with probability at least ${\red 1-O(n^{-90})}$
\begin{equation}\label{eq:mapping-dual-answer}
\max_\lambda \phi_{map}(\lambda,c_0) = (1+o(1))\frac{\pi}{8}\frac{n}{c_0} + O(n^{-3}) = (1+o(1))\frac{\pi}{8}\frac{n}{c_0}.
\end{equation}
The last step is to tighten the cost constraint a little bit and consider $c_0' = c_0-1$. {\red By using \eqref{eq:mapping-dual-answer} twice and recalling \eqref{eq:mapping-dual-low-bd}, we obtain}
\mult{eq:chain}{
W_{map}^*(c_0) \geq \max_\lambda \phi_{map}(\lambda,c_0) = (1+o(1))\frac{\pi}{8}\frac{n}{c_0} = (1+o(1))\frac{\pi}{8}\frac{n}{c_0'} \\
= (1+o(1))\max_\lambda \phi_{map}(\lambda,c_0') \geq W(f^*)-W_{\max},
}
where $f^*=f^*(c_0')$ is as in \eqref{clearly} and 
\beq{Cf}{
C(f^*) \leq c_0' + C_{\max}(f^*)\leq c_0'+1 {\red =} c_0.
}
 This means that the solution $f^*$ is feasible and thus $W(f^*) \geq W_{map}^*(c_0)$. We have from Corollary \ref{cor1} and our expressions for the optimal value of $\l$ that
\[
W_{\max}=O\brac{1+\frac{n}{c_0^2}}\sqrt{\log n/n}=o\bfrac{n}{c_0}=o(W(f^*)).
\]
 Going back to \eqref{eq:chain} we see that $W_{map}^*(c_0) \approx\frac{\pi}{8}\frac{n}{c_0}$, thus showing \eqref{eq:mapping} holds with probability at least $1-O(n^{-90})$. Moreover,
\begin{equation}\label{eq:Wmap-dual}\
W_{map}^*(c_0) \approx \max_\lambda \phi_{map}(\lambda,c_0).
\end{equation}
{\bf Case 2:} $c_0 = \alpha n$, $\alpha = O(1)$.

If $\a>1/2$ then w.h.p. we can take the mapping $f(v)$ where $W_{(v,f(v))}=\min\set{W_{(v,w)}:w\neq v}$. Then the sum $\sum_vC_{(v,f(v))}$ being the sum of $n$ independent uniform $[0,1]$ random variables is asymptotically equall to $n/2$ w.h.p. This implies that $f$ defines a feasible mapping w.h.p.

Assume then that $\a<1/2$. We use the argument of Case 1 and we omit details common to both cases. We first check that the optimal value $\l^*$ is in the second range of Lemma \ref{lm:Emin}, {\red justifying \eqref{lowerl} in this case}. To see this observe that if $\l=\frac{\b}{n}$ where $\b\in \left[\frac{1}{\log n},\log n\right]$ then $\f_{map}(\l,c_0) \approx f(\b)-\a\b$. Now Lemma \ref{f(b)} affirms that $f(\b)-\a\b$ is concave and that there is a unique positive solution $\b^*$ to $f'(\b)=\a$. It follows that $\max_\l\f_{map}(\l,c_0)\approx f(\b^*)-\a\b^*$. 

We let $c_0'=c_0-1\approx c_0$. Using the continuity of $f$ and $W_{\max}=o(1)$ from \eqref{Wmax}, we have $W^*_{map}(c_0)\ge (1+o(1))W(f^*)$ in \eqref{eq:chain} and by \eqref{clearly} we have $C(f^*)\leq c_0'+1=c_0$.
Again, \eqref{lowerl} is satisfied.

\bigskip
{\bf Case 3:} $c_0 = \alpha$, $\alpha = O(1)$.

If $\a<1$ then w.h.p. the problem is infeasible. This is because the sum $S=\sum_v\min_{w}W_{(v,w)}$ is the sum of $n$  i.i.d.  random variables and this sum has mean $\frac{n}{n+1}$ and Lemma \ref{lm:conc} with $\l=0$ shows that $S$ is concentrated around its mean.

Assume then that $\a>1$. We use the argument of Case 1 and as in Case 2, we omit details common to both cases. We first check that the optimal value $\l^*$ is in the {\red fourth} range of Lemma \ref{lm:Emin}. To see this observe that if $\l=\b n$ where $\b\in \left[\frac{1}{\log n},\log n\right]$ then $\f_{map}(\l,c_0)\approx n(g(\b)-\a\b)$. Now Lemma \ref{g(b)} affirms that $g(\b)-\a\b$ is concave and that there is a unique positive solution $\b^*$ to $g'(\b)=\a$. It follows that $\max_\l\f_{map}(\l,c_0)\approx n(g(\b^*)-\a\b^*)$. It only remains to check that $C_{\max}(f^*)=o(1)$ so that we can apply \eqref{clearly}. Again, \eqref{lowerl} is satisfied.

We now let $c_0'=c_0-1/n^{1/2}\approx c_0$. Using the continuity of $g$ and $W_{\max}\leq 1$ we have $W^*_{map}\ge (1+o(1))W(f^*)$ in \eqref{eq:chain} and we have $C(f^*)\leq c_0'+K\frac{\log n}{n} \leq c_0$.

One final point. Our expressions for $\f(\l)$ are only valid within a certain range. But because, $\f$ is concave and we have a vanishing derivative, we know that the values outside the range cannot be maximal.

\subsection{From a mapping to an arborescence}\label{From}
{\bf Case 1:}\\
Fix $c_0$ and let $c_0' = c_0(1 - \varepsilon)$ with $\varepsilon = n^{-1/4}\log n$. Since $c_0' \approx c_0$, by \eqref{eq:mapping} and \eqref{eq:Wmap-dual}, we have
\[
W^*_{arb}(c_0) \geq W^*_{map}(c_0) \approx \frac{\pi n}{8c_0} \approx \frac{\pi n}{8c_0'} \approx  W^*_{map}(c_0') \approx \max_\lambda \phi_{map}(\lambda,c_0').
\]
Let the maximum on the right hand side be attained at some $\lambda^*$ and let $\l^{**}$ be the closest discretized value. Let $f^*$ be as defined in Section \ref{Dual} and $f^{**}$ minimise $W(f)+\l^{**}C(f)$. Then, we have from {\red Lemma \ref{cl1}} and \eqref{clearly} that 
\begin{equation}\label{eq:W-C}
\begin{split}
W(f^{*})\leq  W^*_{map}(c_0)+W_{\max}+O(n^{-3})\\
C(f^{*})\leq c_0'+C_{\max}.
\end{split}
\end{equation}
We now argue that with high probability it is possible to modify $f^{*}$ to obtain a feasible arborescence $A$, that is of cost at most $c_0$, having weight very close to $W^*_{map}$. 

By Lemmas \ref{lm:opt-is-unif} and \ref{lm:cyc}, with probability at least $1-O(n^{-10})$, $f^{*}$ has at most $K\log n$ cycles for some universal constant $K$. Then the largest component {\red of the digraph $D_{f^*}$}, call it $U$, has at least $\frac{n}{K\log n}$ vertices. 

{\red 
We know from Corollary \ref{cor1} that w.h.p. we do not use any edge of weight more than $O((1+\l^*)\sqrt{\log n/n})$ in constructing $f^*$. It follows, as in Karp and Steele \cite{KS}, that given $f^*$, we can treat the edges of weight at least $\hW=K(1+\l^*)\sqrt{\log n/n}$ as being independent samples from $[\hW,1]$. Similarly, edges of cost at least $\hC=K(1+1/\l^*)\sqrt{\log n/n}$ can be treated as independent samples from $[\hC,1]$. 
}

We consider two cases:

{\bf Case 1a: $n^{1/2}\leq c_0\leq n/\om$:}\\
For each cycle, choose arbitrarily one vertex belonging to it, say $v$, remove its out-edge, breaking the cycle and put instead the minimum weight out-edge connecting it to {\red the maximum component $U^*$}. This way $f^{*}$ is transformed into an arborescence, call it $A$. The probability that the added out edge weighs more than $2\hW$ is at most 
\[
(1-2\hW)^{\Omega(n/\log n)}\leq \exp\set{-\Omega\brac{\frac{n}{\log n}\cdot  \brac{1+\frac{n}{c_0^2}}\sqrt{\bfrac{\log n}{n}}}}=o(n^{-10}).
\]
We have $W^*_{map}=\Omega(n/c_0)=\Omega(\om)$ and \eqref{eq:W-C} and $W_{\max},C_{\max}\leq 1$. 
\begin{align*}
W(A)&\leq {\red W(f^{*})+2K\hW\log n=\brac{1+o(1)}W^*_{map}}.\\
C(A)&\leq c_0'+1+O(n^{-3})+K\log n\leq c_0.
\end{align*}
{\red To justify the final estimate for $W(A)$ we have used $\frac{\l^*\sqrt{\log n/n}}{n/c_0} =O\bfrac{\sqrt{\log n/n}}{c_0}=o(1)$}.

{\bf Case 1b: $\om\leq c_0\leq n^{1/2}$:}\\

For each cycle, choose arbitrarily one vertex belonging to it, say $v$, remove its out-edge, breaking the cycle and put instead the minimum cost out-edge connecting it to $U^*$. This way $f^{*}$ is transformed into an arborescence, call it $A$. The probability that the added out edge costs more than $2\hC$ is at most 
\[
(1-2\hC)^{\Omega(n/\log n)}\leq \exp\set{-\Omega\brac{\frac{n}{\log n}\cdot  \brac{1+\frac{c_0^2}{n}}\sqrt{\bfrac{\log n}{n}}}}=o(n^{-10}).
\]
We have $W^*_{map}=\Omega(n/c_0)=\Omega(n^{1/2})$ and \eqref{eq:W-C} and $W_{\max},C_{\max}\leq 1$. 
\begin{align*}
W(A)&\leq {\red W(f^{*})+K\log n=\brac{1+o(1)}W^*_{map}}.\\
C(A)&\leq c_0'+1+O(n^{-3})+2K\hC\log n\leq c_0.
\end{align*}
{\red To justify the final estimate for $C(A)$ we have used $\frac{(\l^*)^{-1}\sqrt{\log n/n}}{c_0} =O\bfrac{c_0\sqrt{\log n/n}}{n}=o(1)$}.
This finishes the proof of Case 1.

{\bf Case 2:}\\
We have $c_0=\Omega(n)$ here and $\l^{**}=\b^{**}=\Theta(1)$.  We can therefore use \eqref{eq:conc-Cmax} to argue that w.h.p. $\max\set{W_{\max}(f^{*}),C_{\max}(f^{*})}=O(\sqrt{\log n/n})$.  We then can proceed as in Case 1b and use edges $e$ such that $W_e,C_e\in [n^{-1/4},2n^{-1/4}]$ to transform $f^{*}$ into an arborescence and w.h.p. change weight and cost by $o(1)$ only. 

{\bf Case 3:}\\
We have $\l^{**}=\b^{**}n=\Theta(n)$.  We can therefore use \eqref{eq:conc-Cmax} to argue that w.h.p. $C_{\max}(f^{*})=O(\sqrt{\log n/n})$.  We proceed as in Case 1b and use edges $e$ such that $W_e\leq 1,C_e\in [n^{-1/4},2n^{-1/4}]$ to transform $f^*$ into an arborescence. The extra cost in going from mapping $f^{*}$ to an arborescence $A$ is $O(n^{-1/4}\log n)=o(1)$, {\red thus $C(A) \leq c_0' + r_n = c_0(1-\e ) + r_n$, where $r_n = O(\sqrt{\log n/n}) + O(n^{-1/4}\log n)$, so $C(A) < c_0$ provided that $\e$ is chosen such that $\e c_0 > r_n$.}  The extra weight is $O(\log n)$ which is much smaller than the optimal weight which is $\Omega(n)$ w.h.p.
\section{More general weights and costs}\label{a<1}
We now consider the case where we have $W_e,C_e,e\in E(K_n)$ distributed as independent copies of $U^s,s<1$, {\blue $U \sim \text{Unif}([0,1])$}. We follow the same ideas as for $s=1$, but there are technical difficulties. Let us first though explain the need for the lower bound on $c_0$ in Theorem \ref{thm:main-resx}, up to a logarithmic factor.
\begin{lemma}\label{lowerbound}
Let $X_1,X_2,\ldots,X_n$ be independent copies of $U^s$ and let $Y=min_{i\leq n}X_i$. Then 
\beq{minX}{
\E Y\approx \G({\blue s +1})n^{-s}.
}
\end{lemma}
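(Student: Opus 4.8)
The plan is to compute the expectation of $Y = \min_{i \le n} X_i$ directly via the tail-integral formula $\E Y = \int_0^1 \p{Y > t}\,\dd t = \int_0^1 \big(\p{X_1 > t}\big)^n\,\dd t$, exactly as was done for $\E\min\{X_i + \lambda Y_i\}$ in Lemma~\ref{lm:Emin}. Since $X_1 = U^s$ with $U$ uniform on $[0,1]$, we have $\p{X_1 > t} = \p{U > t^{1/s}} = 1 - t^{1/s}$ for $t \in [0,1]$, so $\E Y = \int_0^1 (1 - t^{1/s})^n\,\dd t$.

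The next step is to evaluate this integral asymptotically. The substitution $u = t^{1/s}$, i.e. $t = u^s$, $\dd t = s u^{s-1}\,\dd u$, turns it into $\E Y = s\int_0^1 u^{s-1}(1-u)^n\,\dd u = s\,B(s, n+1) = s\,\frac{\Gamma(s)\Gamma(n+1)}{\Gamma(n+1+s)} = \frac{\Gamma(s+1)\,\Gamma(n+1)}{\Gamma(n+1+s)}$, using $s\Gamma(s) = \Gamma(s+1)$. Then I would invoke the standard asymptotic for ratios of Gamma functions, $\frac{\Gamma(n+1)}{\Gamma(n+1+s)} = (1+o(1))\, n^{-s}$ as $n \to \infty$ (a consequence of Stirling's formula, or of $\Gamma(n+a)/\Gamma(n+b) \sim n^{a-b}$), to conclude $\E Y = (1+o(1))\,\Gamma(s+1)\, n^{-s}$, which is exactly \eqref{minX} in the paper's $\approx$ notation.

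Alternatively, if one prefers to avoid the Beta-function identity, the same estimate follows by splitting the integral $\int_0^1 (1-t^{1/s})^n\,\dd t$ at $t = (\log n / n)^s$: on the small range one uses $1 - x = e^{-x + O(x^2)}$ together with the substitution $v = n t^{1/s}$ to recover $\int_0^\infty e^{-v}\, s v^{s-1}\,\dd v \cdot n^{-s} = \Gamma(s+1)\,n^{-s}$, while the contribution of the complementary range is bounded by $(1 - (\log n/n))^n \le e^{-\log n} = n^{-1}$, which is $o(n^{-s})$ since $s < 1$.

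There is essentially no serious obstacle here; the only mild point requiring care is making the $o(1)$ in the Gamma-ratio asymptotic (or equivalently in the split-integral tail estimate) uniform, so that it genuinely fits the paper's definition of $\approx$ — but this is routine. This lemma is then used downstream to explain why $c_0$ cannot be taken too small: summing the per-vertex lower bound $\E Y \approx \Gamma(s+1) n^{-s}$ over the $n$ vertices gives that any arborescence has weight $\Omega(n^{1-s})$, which combined with the target value $\Theta(n^{2-s}/c_0)$ forces $c_0 = O(n)$, while the genuine constraint $n^{1-s}\log n \ll c_0$ comes from needing the relevant Lagrange multiplier to land in a well-behaved range.
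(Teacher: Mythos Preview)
Your proposal is correct and follows essentially the same route as the paper: both compute $\E Y = \int_0^1 (1-t^{1/s})^n\,\dd t$, substitute $u = t^{1/s}$ to recognise $s\,B(s,n+1) = \Gamma(s+1)\Gamma(n+1)/\Gamma(n+s+1)$, and then apply Stirling (equivalently, the ratio asymptotic $\Gamma(n+1)/\Gamma(n+1+s) \sim n^{-s}$) to obtain $\E Y \approx \Gamma(s+1)n^{-s}$. Your alternative split-integral argument is a valid backup but not needed here.
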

\begin{proof}
\begin{align*}
\E\min_{i\leq n}X_i &=\int_{t=0}^1 \Pr({\blue U}>t^{1/s})^n \dd t\\
&=\int_{t=0}^1(1-t^{1/s})^n\dd t\\
&=s\int_{t=0}^1(1-s)^ns^{s-1}\dd s\\
&=s B(n+1,s)\qquad \text{ Beta distribution}\\
&=\frac{\G(n+1)\G(s+1)}{\G(n+s+1)}\\
&\approx \G(s+1)\frac{(n/e)^n}{((n+s)/e)^{n+s}}\qquad \text{Stirling's approximation}\\ 
&= \frac{\G(s+1)e^s}{(n+s)^s} \bfrac{n}{n+s}^n\\
&\approx \frac{\G(s+1)}{n^s}.
\end{align*}
\end{proof}
It follows from \eqref{minX} that the expected weight of a minimum random mapping is asymptotically equal to $\G(s+1)n^{1-s}$. This being the expectation of the sum of $n$ independent bounded random variables distributed as $Y$ in Lemma \ref{lowerbound}, we see from Hoeffding's theorem \cite{H} that it is concentrated around its mean. This explains the relevance of the lower bound in \eqref{c0range}, up to the $\log n$ factor.

Our next task is get a version of Lemma \ref{lm:Emin} for our more general random variables.
\begin{lemma}\label{lm:Emin1}
Let $X_1,X_2,\ldots, X_n$ and $Y_1,Y_2,\ldots, Y_n$ be independent copies of $U^s$. Suppose that 
\beq{valambda}{
\left(\frac{\log n}{n}\right)^s \ll \l \ll \left(\frac{n}{\log n}\right)^s. 
}
 Let
\[
C_s={\blue \G(s/2+1)}\bfrac{\G(2/s+1)}{\G(1/s+1)^2}^{s/2}.
\]
Then
\[
\E\min_{i \leq n}\{X_i + \lambda Y_i\} \approx\frac{C_s \l^{1/2}}{n^{s/2}}.
\]
\end{lemma}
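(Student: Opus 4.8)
The plan is to compute $\E\min_{i\le n}\{X_i+\lambda Y_i\}$ using the same integral representation as in Lemma~\ref{lm:Emin}, namely
\[
\E\min_{i \leq n}\{X_i + \lambda Y_i\} = \int_0^{\infty} \Big[\p{X_1 + \lambda Y_1 > t}\Big]^n \dd t,
\]
but now with $X_1 = U_1^s$, $Y_1 = U_2^s$. The key observation is that because $0<s<1$, the density of $U^s$ near $0$ behaves like $\frac{1}{s}u^{1/s-1}$, so $U^s$ is ``heavy at the origin.'' For $t$ small, $\p{X_1+\lambda Y_1 > t} = 1 - \p{X_1+\lambda Y_1 \le t}$, and the small-ball probability $\p{U_1^s + \lambda U_2^s \le t}$ is, to leading order for $t\to 0$, governed by the scaling $U_1^s \sim t$, $U_2^s \sim t/\lambda$, i.e. $U_1 \sim t^{1/s}$, $U_2 \sim (t/\lambda)^{1/s}$. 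The first step is therefore to write out $p(t) := \p{U_1^s + \lambda U_2^s \le t}$ exactly (or asymptotically as $t\to0$) and identify the constant: setting $U_1 = a$, $U_2 = b$, the region $a^s + \lambda b^s \le t$ has volume $\int\int_{a^s+\lambda b^s\le t}\dd a\,\dd b$, which by the substitution $a = t^{1/s}u$, $b = (t/\lambda)^{1/s}v$ scales like $t^{1/s}\cdot(t/\lambda)^{1/s}\cdot V_s$ where $V_s = \vol\{u^s+v^s\le 1, u,v\ge0\}$. A standard Dirichlet-integral computation gives $V_s = \frac{\G(1/s+1)^2}{\G(2/s+1)}$, so $p(t) \approx C_s' \lambda^{-1/s} t^{2/s}$ with $C_s' = \frac{\G(1/s+1)^2}{\G(2/s+1)}$ for the relevant range of $t$.

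Next I would substitute into the integral and change variables. With $\p{X_1+\lambda Y_1>t} \approx 1 - C_s'\lambda^{-1/s}t^{2/s}$ for small $t$ (and the contribution from $t$ not small being negligible, exactly as the tail estimates in the proof of Lemma~\ref{lm:Emin} handle the ranges $\sqrt{\log n/n}\le t$), the integral becomes $\int_0^\infty (1 - C_s'\lambda^{-1/s}t^{2/s})_+^n\,\dd t$, approximated by $\int_0^\infty e^{-nC_s'\lambda^{-1/s}t^{2/s}}\dd t$ up to $(1+o(1))$, where the condition \eqref{valambda} on $\lambda$ is exactly what guarantees the relevant cutoff $t_* \sim (\lambda^{1/s}/n)^{s/2}$ is small enough (so the linearization $1+x=e^{x+O(x^2)}$ is valid) yet large enough that truncating the integral at, say, $t_*\sqrt{\log n}$ loses only an $o(1)$-fraction — just as in Case 1.1 of Lemma~\ref{lm:Emin}. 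The substitution $r = t\,(nC_s'\lambda^{-1/s})^{s/2}$ turns this into $(nC_s'\lambda^{-1/s})^{-s/2}\int_0^\infty e^{-r^{2/s}}\dd r$, and $\int_0^\infty e^{-r^{2/s}}\dd r = \frac{s}{2}\G(s/2) = \G(s/2+1)$. Collecting the constants yields $\E\min\{X_i+\lambda Y_i\} \approx \G(s/2+1)\,(C_s')^{-s/2}\,\lambda^{1/2}\,n^{-s/2}$, and since $(C_s')^{-s/2} = \big(\frac{\G(2/s+1)}{\G(1/s+1)^2}\big)^{s/2}$ this is exactly $C_s\lambda^{1/2}n^{-s/2}$, as claimed. (When $s=1$ this reduces to $\sqrt{\pi/2}\sqrt{\lambda/n}$, matching \eqref{eq:Emin-lam-small}, a useful sanity check.)

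The main obstacle is the first step: getting the small-ball asymptotics $p(t) = \p{U_1^s+\lambda U_2^s\le t}$ with the correct constant and, more importantly, a usable error bound that holds uniformly over the range of $t$ that actually contributes to the integral. Unlike the $s=1$ case, where $\p{X_1+\lambda Y_1>t}$ has a clean piecewise-polynomial closed form, here the sumset geometry of $\{a^s+\lambda b^s \le t\}$ within the unit square is genuinely curved, and one must be careful that the asymptotic $p(t)\approx C_s'\lambda^{-1/s}t^{2/s}$ is only the leading term — valid when $t^{1/s}\ll 1$ and $(t/\lambda)^{1/s}\ll 1$, which again is where condition \eqref{valambda} enters. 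I would control the error by bounding $p(t)$ above and below by the volume of $\{a^s+\lambda b^s\le t\}$ restricted to the square versus the full quadrant, showing the discrepancy is a lower-order power of $t$, then propagating this through $(1-p(t))^n \approx e^{-np(t)}$ exactly as in the proof of Lemma~\ref{lm:Emin} (splitting the $t$-integral at $t_*\sqrt{\log n}$ and bounding the tail by $e^{-c\log n}$). Everything after the small-ball estimate is a routine repeat of the Case 1.1 computation with $t^2/(2\lambda)$ replaced by $C_s'\lambda^{-1/s}t^{2/s}$.
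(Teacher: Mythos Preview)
Your proposal is correct and follows essentially the same route as the paper: the paper also computes the small-ball probability exactly as $I_1(t)=D_s\,\lambda^{-1/s}t^{2/s}$ with $D_s=\Gamma(1/s+1)^2/\Gamma(2/s+1)$ via the same Beta/Dirichlet integral, then replaces $(1-I_1)^n$ by $e^{-nI_1}$ and evaluates the resulting integral to get $C_s\lambda^{1/2}n^{-s/2}$. The only cosmetic differences are that the paper first assumes $\lambda>1$, writes the CDF piecewise, and bounds the tail contribution by the crude monotonicity $I_3\ge I_2\ge I_1(1)$ (giving an $O(\lambda e^{-D_s n/\lambda^{1/s}})$ error), then disposes of $\lambda\le 1$ via the symmetry $\E\min\{X_i+\lambda Y_i\}=\lambda\,\E\min\{\lambda^{-1}X_i+Y_i\}$; your unified treatment is equivalent.
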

\begin{proof}
Let $X,Y$ be independent copies of $U$. After some elementary computations we see the following.

{\bf Case 1: $\l>1$.}
\beq{Genmin}{
\p{X^s+\l Y^s\leq t}=\begin{cases}
                                           \int_{x=0}^{t^{1/s}}\bfrac{t-x^{s}}{\l}^{1/s}\dd x&0<t<1.\\
                                           \int_{x=0}^{1}\bfrac{t-x^{s}}{\l}^{1/s}\dd x&1<t<\l.\\
                                           (t-\l)^{1/s}+ \int_{x=(t-\l)^{1/s}}^{1}\bfrac{t-x^{s}}{\l}^{1/s}\dd x&\l<t<1+\l\\
                                           1&1+\l<t.
                                          \end{cases}
}
Only the first integral in the above seems computable at the moment and this restricts our range for $c_0$. We have
\beq{I1}{
I_1(t)=\int_{x=0}^{t^{1/s}}\bfrac{t-x^{s}}{\l}^{1/s}\dd x=\frac{t^{2/s}}{s\l^{1/s}} \int_{y=0}^1y^{1/s-1}(1-y)^{1/s}\dd y=\frac{t^{2/s}}{\l^{1/s}}\frac{\G(1/s+1)^2}{\G(2/s+1)}.
}
For the remaining integrals, we only have lower bounds. Let $Z=\min_{i\leq n}\set{X_i+\l Y_i}$. Going back to \eqref{tti} we see that we need lower bounds to show that the contributions of these integrals to $\E(Z)$ is negligible. We note also that we will only be concerned with small values of $t$ in the sequel and so we need to show that these integrals are large compared to the R.H.S. of \eqref{I1}.
{\blue 
Put $D_s=\frac{\G(1/s+1)^2}{\G(2/s+1)}$.
Crudely, by the monotonicity in $t$ of $\p{X^s+\l Y^s\leq t}$,
\[
I_3 \geq I_2 \geq I_1(1) = \frac{1}{\l^{1/s}}D_s.
\]
Then 
\mult{ExpZ}{
\E(Z)=\int_{t=0}^1 (1-I_1)^n\dd t+\int_{t=1}^\l (1-I_2)^n\dd t+\int_{t=\l}^{1+\l}(1-I_3)^n\dd t =\\
\int_{t=0}^1\brac{1-\frac{t^{2/s}}{\l^{1/s}}D_s}^n\dd t+O\brac{\l e^{-D_s n/\l^{1/s}}}.
}
}
We substitute $t=\bfrac{s\l^{1/s}}{D_s n}^{s/2}$ and estimate
\begin{align*}
\int_{t=0}^1\brac{1-\frac{D_s t^{2/s}}{\l^{1/s}}}^n\dd t&\leq \int_{t=0}^{\infty}\exp\set{-\frac{D_s nt^{2/s}}{\l^{1/s}}}\dd t\\
&= \frac{s\l^{1/2}}{2(D_s n)^{s/2}}\int_{t=0}^{\infty}e^{-s}s^{s/2-1}\dd s\\
&=\frac{\G(s/2+1)\G(2/s+1)^{s/2}\l^{1/2}}{\G(1/s+1)^s n^{s/2}}.
\end{align*}
On the other hand, if $t_0=\frac{\l^{1/2}}{n^{s/4}\log n}$ then \\
{\red Is $t_0\leq 1$?}
\begin{align*}
\int_{t=0}^1\brac{1-\frac{D_s t^{2/s}}{\l^{1/s}}}^n\dd t&\geq \int_{t=0}^{t_0}\brac{1-\frac{D_s t^{2/s}}{\l^{1/s}}}^n\dd t\\
&=\int_{t=0}^{\blue t_0}\exp\set{-\frac{D_s nt^{2/s}}{\l^{1/s}}+O\bfrac{nt_0^{4/s}}{\l^{2/s}}}\dd t\\
&\approx \int_{t=0}^{t_0}\exp\set{-\frac{D_s nt^{2/s}}{\l^{1/s}}}\dd t\\
&=\int_{t=0}^{\infty}\exp\set{-\frac{D_s nt^{2/s}}{\l^{1/s}}}\dd t-O\brac{\exp\set{-\frac{D_s nt_0^{2/s}}{\l^{1/s}}}}\\
&=\int_{t=0}^{\infty}\exp\set{-\frac{D_s nt^{2/s}}{\l^{1/s}}}\dd t- O\brac{\exp\set{-\frac{D_s n^{1/2}}{\log^{2/s}n}}}\\
&\approx \frac{\G(s/2+1)\G(2/s+1)^{s/2}\l^{1/2}}{\G(1/s+1)^s n^{s/2}}.
\end{align*}
{\blue It remains to notice that thanks to the assumption on $\l$, the error term $O\brac{\l e^{-D_s n/\l^{1/s}}}$ is small relative to the main term.}

{\bf Case 2: $\l\leq 1$.} We have
\[
\E\min_{i \leq n}\{X_i + \lambda Y_i\} = \l\E\min_{i \leq n}\{\l^{-1}X_i + Y_i\} \approx \frac{C_s\l^{1/2}}{n^{s/2}}.
\]
\end{proof}
\begin{corollary}\label{cor:Emin-low-bd1}
Under the assumptions of Lemma \ref{lm:Emin1}, we have
\beq{corbd1}{
n\E \min_{i \leq n}\{X_i + \lambda Y_i\} \approx C_s\l^{1/2}n^{1-s/2}.
}
\end{corollary}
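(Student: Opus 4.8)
The final statement is Corollary \ref{cor:Emin-low-bd1}, which asserts that under the hypotheses of Lemma \ref{lm:Emin1} we have $n\,\E\min_{i\le n}\{X_i+\lambda Y_i\}\approx C_s\lambda^{1/2}n^{1-s/2}$. The plan is to simply read this off Lemma \ref{lm:Emin1}: that lemma already establishes $\E\min_{i\le n}\{X_i+\lambda Y_i\}\approx C_s\lambda^{1/2}n^{-s/2}$ under exactly the range \eqref{valambda} for $\lambda$, so multiplying both sides of the asymptotic relation by $n$ — which is permissible since $\approx$ is preserved under multiplication by a fixed positive quantity (the multiplicative error $\e(n)$ is unchanged) — immediately gives $n\,\E\min_{i\le n}\{X_i+\lambda Y_i\}\approx C_s\lambda^{1/2}n^{1-s/2}$, which is \eqref{corbd1}.

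Concretely, the proof is one line: apply Lemma \ref{lm:Emin1} and multiply through by $n$, noting that $n\cdot n^{-s/2}=n^{1-s/2}$. There is essentially no obstacle here; the corollary is a trivial restatement of the lemma packaged in the form that will be convenient later (compare with how Corollary \ref{cor:Emin-low-bd} was derived from Lemma \ref{lm:Emin} — there a little more work was needed because several regimes of $\lambda$ had to be combined into a single clean lower bound via the bound $f(\beta)\ge\tfrac12\sqrt\beta$, whereas in the $s<1$ setting Lemma \ref{lm:Emin1} already handles the whole relevant range of $\lambda$ with one uniform formula).

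The only point that warrants a remark is that the relation $\approx$ here is a genuine two-sided asymptotic equality (both the upper and lower bounds with matching constant $C_s$ were proved inside Lemma \ref{lm:Emin1}), so no separate lower-bound argument of the kind used for Corollary \ref{cor:Emin-low-bd} is needed; the statement is a direct consequence. I would therefore write the proof as:

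\begin{proof}
This is immediate from Lemma \ref{lm:Emin1}: under the assumption \eqref{valambda} we have $\E\min_{i\le n}\{X_i+\lambda Y_i\}\approx C_s\lambda^{1/2}n^{-s/2}$, and multiplying both sides by $n$ gives \eqref{corbd1}, since multiplication by a positive quantity preserves the relation $\approx$.
\end{proof}
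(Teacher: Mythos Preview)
Your proposal is correct and matches the paper exactly: the paper states Corollary \ref{cor:Emin-low-bd1} with no separate proof, treating it as an immediate consequence of Lemma \ref{lm:Emin1} obtained by multiplying through by $n$. Your remark that, unlike Corollary \ref{cor:Emin-low-bd}, no additional lower-bound work is needed here (because Lemma \ref{lm:Emin1} already gives a single two-sided asymptotic across the whole range \eqref{valambda}) is a nice clarification.
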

Our next task is to prove an appropriate version of Lemma \ref{concsec}.
\begin{lemma}\label{lm:conc1}
Let $W_{(i,j)}$ and $C_{(i,j)}$, $i, j \leq n$, be i.i.d. copies of $U^s$ and suppose that \eqref{valambda} holds. For $X_i = \min_{j} \{W_{(i,j)} + \lambda C_{(i,j)}\}$, $S = \sum_{i \leq n} X_i$ and $\varepsilon = \Omega(n^{-1/5})$, we have
\begin{equation}\label{eq:concx}
\p{|S - \E S| > \varepsilon \E S} = O(n^{-99}).
\end{equation}
Moreover,
\begin{equation}\label{eq:conc-Cmax1}
\p{\exists i:\; X_i > 10(1+\lambda)n^{-s/2}\log^{s/2}n} \leq n^{-99}.
\end{equation}
\end{lemma}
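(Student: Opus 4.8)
The plan is to repeat the proof of Lemma~\ref{lm:conc} almost verbatim, using the threshold $M=10(1+\l)n^{-s/2}\log^{s/2}n$ in place of $10(1+\l)\sqrt{\log n/n}$, and invoking Corollary~\ref{cor:Emin-low-bd1} wherever the earlier argument used Corollary~\ref{cor:Emin-low-bd}. As before, I would let $B$ be the event that $X_i\geq M$ for some $i\leq n$ and split $\p{|S-\E S|>\e\E S}\leq\p{B}+\p{(|S-\E S|>\e\E S)\wedge B^c}$ as in \eqref{eq:conc-two-terms}.

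First I would bound $\p{B}$, which also gives \eqref{eq:conc-Cmax1} directly. Writing the generic pair $W_{(i,j)},C_{(i,j)}$ as $V_1^s,V_2^s$ with $V_1,V_2$ i.i.d.\ uniform on $[0,1]$, one has $W_{(i,j)}+\l C_{(i,j)}\leq(1+\l)\max\{V_1,V_2\}^s$; since $10(\log n/n)^{s/2}\leq 1$ for large $n$, this yields $\p{W_{(i,j)}+\l C_{(i,j)}\geq M}\leq\p{\max\{V_1,V_2\}\geq(M/(1+\l))^{1/s}}=1-10^{2/s}\log n/n$. By independence $\p{X_1\geq M}\leq(1-10^{2/s}\log n/n)^n\leq n^{-10^{2/s}}$, and a union bound over $i$ gives $\p{B}\leq n^{1-10^{2/s}}\leq n^{-99}$, using $s\leq 1$ so that $10^{2/s}\geq 100$.

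For the second term I would truncate as in Lemma~\ref{lm:conc}: on $B^c$, $S$ coincides with $\tilde S=\sum_i X_i\1_{X_i\leq M}$, a sum of independent variables in $[0,M]$, so the multiplicative Chernoff bound gives $\p{|\tilde S-\E\tilde S|>t\E\tilde S}\leq 2\exp\{-t^2\E\tilde S/(3M)\}$. The truncation bias is $|\E S-\E\tilde S|\leq(1+\l)n\,\p{X_1>M}\leq(1+\l)n\cdot n^{-10^{2/s}}\leq n^{-98}$, using $\l\ll n$ from \eqref{valambda}. By Corollary~\ref{cor:Emin-low-bd1}, $\E S=n\,\E X_1\approx C_s\l^{1/2}n^{1-s/2}$, and the lower bound $\l\gg(\log n/n)^s$ together with $s\leq1$ makes this $\gg(\log n)^{s/2}$; combined with $\e=\Omega(n^{-1/5})$, a short computation gives $|\E S-\E\tilde S|\leq\tfrac12\e\E S$ for $n$ large. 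Hence $\p{|\tilde S-\E S|>\e\E S}\leq\p{|\tilde S-\E\tilde S|>\tfrac12\e\E S}\leq\p{|\tilde S-\E\tilde S|>\tfrac12\e\E\tilde S}\leq 2\exp\{-\e^2\E\tilde S/(12M)\}$.

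It then remains to show the last exponent is large. Here I would use that $\l^{1/2}/(1+\l)\gg(\log n/n)^{s/2}$ throughout the range \eqref{valambda}: it is $\geq\tfrac12\l^{1/2}$ when $\l\leq1$, and $\geq\tfrac12\l^{-1/2}$ when $\l\geq1$, where $\l\ll(n/\log n)^s$. Therefore $\E\tilde S/M\geq\E S/(2M)=\Omega\!\left(\l^{1/2}n/((1+\l)\log^{s/2}n)\right)=\Omega(n^{1-s/2})=\Omega(n^{1/2})$ since $s\leq1$, and so $\e^2\E\tilde S/(12M)=\Omega(n^{-2/5}n^{1/2})=\Omega(n^{1/10})$, giving $O(e^{-n^{1/10}})$ for the second term. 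Combined with the bound on $\p{B}$, this would establish \eqref{eq:concx}. I expect the only delicate point to be keeping the three estimates — for $\E S$, for the truncation bias, and for the Chernoff exponent — mutually compatible across the two regimes $\l\leq1$ and $\l\geq1$; this is exactly where both halves of the hypothesis \eqref{valambda} are needed, since near the ends of that range $\E S$ exceeds the error terms only by a polynomial factor.
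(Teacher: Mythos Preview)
Your proposal is correct and follows essentially the same argument as the paper: the same truncation threshold $M$, the same splitting into $\p{B}$ plus a Chernoff term, the same use of $W+\l C\le(1+\l)\max\{W,C\}$ together with $\Pr(\max\{W,C\}\ge u)=1-u^{2/s}$ to control $\p{B}$, and the same use of Corollary~\ref{cor:Emin-low-bd1} and the inequality $\l^{1/2}/(1+\l)\gg(\log n/n)^{s/2}$ to make the Chernoff exponent large. Your bookkeeping is slightly more explicit in places (e.g.\ writing $n^{1-10^{2/s}}$ and tracking that $s\le1$ forces $10^{2/s}\ge100$), but the route is the paper's own.
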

\begin{proof}
We closely follow the argument of Lemma \ref{concsec}, making adjustments as necessary. Let $M = 10(1+\lambda)n^{-s/2}\log^{s/2}n$ and $B$ be the event that for some $i$, $X_i \geq M$. We have,
\begin{equation}\label{eq:conc-two-termsa}
\p{|S - \E S| > \varepsilon \E S} \leq \p{B} + \p{(|S - \E S| > \varepsilon \E S)\wedge B^c}.  
\end{equation}
First we bound $\p{B}$. By the union bound and independence,
\[
\p{B} \leq n\p{X_1 \geq M} = n\left[\p{W_{(1,1)}+\lambda C_{(1,1)} \geq M}\right]^{n}.
\]
We use $W_{(1,1)}+\lambda C_{(1,1)} \leq (1+\lambda)\max\{W_{(1,1)},C_{(1,1)}\}$ and note that since these variables are distributed as $U^s$, we have $\p{\max\{W_{(1,1)},C_{(1,1)}\} \geq u} = 1 - u^{2/s}$ for $u < 1$. We thus get
\[
\p{B} \leq n\left[1 - 10^{2/s}\frac{\log n}{n}\right]^{n} \leq ne^{-100\log n} = n^{-99},
\]
which establishes \eqref{eq:conc-Cmax1}.

The second term in \eqref{eq:conc-two-termsa} can be bounded using Chernoff's inequality because on $B^c$, $X_i = X_i\1_{X_i \leq M}$, that is $S$ can be treated as a sum of $n$ independent random variables $\tilde X_i = X_i\1_{X_i \leq M}$ with $\tilde X_i \in [0,M]$. Clearly $\tilde X_i \leq X_i$ and $\tilde S = \sum \tilde X_i \leq S$, so
\begin{align*}
\p{(|S - \E S| > \varepsilon \E S)\wedge B^c} = \p{(|\tilde S - \E S| > \varepsilon \E S)\wedge B^c}  \leq \p{|\tilde S - \E S| > \varepsilon \E S}.
\end{align*}
By the Chernoff bound
\[
\p{|\tilde S - \E \tilde S| > \varepsilon \E \tilde S} \leq 2\exp\left\{-\frac{\varepsilon^2\E\tilde S}{3M}\right\}.
\]
Note that
\[
|\tilde S - \E S| \leq |\tilde S - \E \tilde S| + |\E S - \E \tilde S|.
\]
and
\begin{align*}
 |\E S - \E \tilde S| = \left|\E \sum X_i\1_{X_i > M} \right| \leq (1+\lambda)\E\sum \1_{X_i > M} &\leq (1+\lambda)n\p{X_1  > M} \\
 &= O(n^{-90}),
\end{align*}
thanks to \eqref{eq:conc-Cmax1}. Moreover, by Corollary \ref{cor:Emin-low-bd1}, $\E S=\Omega(n^{1-s/2}\l^{1/2})$ thus
\[
|\E S - \E \tilde S| \leq \frac{1}{2}\varepsilon \E S
\]
and we get
\begin{align*}
\p{|\tilde S - \E S| > \varepsilon \E S} \leq  \p{|\tilde S - \E \tilde S| > \frac{1}{2}\varepsilon \E S} &\leq \p{|\tilde S - \E \tilde S| > \frac{1}{2}\varepsilon \E \tilde S} \\
&\leq 2\exp\left\{-\frac{\varepsilon^2\E\tilde S}{12M}\right\}.
\end{align*}
Finally, observe that {\blue since $\frac{\l^{1/2}}{1+\l} \geq \frac{1}{2}\max\{\l,\l^{-1}\}^{1/2} \gg \frac{1}{2}\left(\frac{\log n}{n}\right)^{s/2}$}, 
\[
\frac{\E \tilde S}{M} \geq \frac{\E S}{2M}\approx \frac{C_s n\l^{1/2}}{20(1+\lambda)\log^{s/2}n} = {\blue \Omega\left(n^{1-s/2}\right)}.
\]
So
\[
\p{|S - \E S| > \varepsilon \E S, B^c} \leq 2e^{-{\blue \Omega(n^{9/10-s/2})}}.
\]
In view of \eqref{eq:conc-two-termsa}, this combined with \eqref{eq:conc-Cmax1} finishes the proof of \eqref{eq:concx}.
\end{proof}
In place of Corollary \ref{cor1} we have
\begin{corollary}\label{cor1x}
Let $M_n$ denote the minimum  weight of a mapping with weights $W_e+\l C_e,e\in E(\vec{K}_n)$, with $\l$ as in Lemma \ref{lm:conc1}. Then with probability $1-O(n^{-90})$,
\[
M_n\approx C_s n^{1-s/2}\l^{1/2}\text{ and }W_{\max}=O((1+\l)n^{-s/2}\log^{s/2}n)=o(M_n).
\]
\end{corollary}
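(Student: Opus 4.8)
\emph{Proof idea.} The plan is to mirror the proof of Corollary~\ref{cor1}, substituting the general-weight analogues Lemma~\ref{lm:Emin1}, Corollary~\ref{cor:Emin-low-bd1} and Lemma~\ref{lm:conc1} for Lemma~\ref{lm:Emin} and Lemma~\ref{lm:conc}. Since the combined edge weights $W_e+\l C_e$ are subject to no global constraint, a minimum-weight mapping is obtained by selecting, independently for each $i\in[n]$, the cheapest out-edge; so with $X_i=\min_{j\neq i}\{W_{(i,j)}+\l C_{(i,j)}\}$ and $S=\sum_{i\in[n]}X_i$ we have $M_n=S$. As throughout the paper, the difference between minimising over the $n-1$ admissible indices $j\neq i$ and over all $n$ indices is absorbed in the $(1+o(1))$ factors.

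First I would estimate $\E S$. Lemma~\ref{lm:Emin1} gives $\E X_i\approx C_s\l^{1/2}n^{-s/2}$ for each $i$, so summing yields $\E S\approx C_s\l^{1/2}n^{1-s/2}$, which is exactly Corollary~\ref{cor:Emin-low-bd1}. I then apply Lemma~\ref{lm:conc1} with $\e=n^{-1/5}$ --- this is legitimate because hypothesis~\eqref{valambda} on $\l$ is in force --- to obtain $\p{|S-\E S|>\e\E S}=O(n^{-99})$. Composing the two $(1+o(1))$-type statements (the concentration window $1\pm n^{-1/5}$ and the $\approx$ in Corollary~\ref{cor:Emin-low-bd1}) gives a single error function tending to $0$, hence $M_n=S\approx C_s\l^{1/2}n^{1-s/2}$ with probability $1-O(n^{-99})$, which is certainly $1-O(n^{-90})$.

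For the bound on $W_{\max}$, observe that if $f^*$ is an optimal mapping then $W_{(i,f^*(i))}\le W_{(i,f^*(i))}+\l C_{(i,f^*(i))}=X_i$ for every $i$, so $W_{\max}\le\max_i X_i$, and the tail estimate~\eqref{eq:conc-Cmax1} of Lemma~\ref{lm:conc1} bounds $\max_i X_i\le 10(1+\l)n^{-s/2}\log^{s/2}n$ with probability at least $1-n^{-99}$. It remains to verify $W_{\max}=o(M_n)$. Since
\[
\frac{(1+\l)n^{-s/2}\log^{s/2}n}{C_s\l^{1/2}n^{1-s/2}}=\frac{1}{C_s}\cdot\frac{1+\l}{\l^{1/2}}\cdot\frac{\log^{s/2}n}{n},
\]
and $\frac{1+\l}{\l^{1/2}}=\l^{1/2}+\l^{-1/2}=O\big(\max\{\l,\l^{-1}\}^{1/2}\big)=o\big((n/\log n)^{s/2}\big)$ by~\eqref{valambda}, this ratio is $o\big(n^{s/2}\log^{-s/2}n\cdot\log^{s/2}n/n\big)=o(n^{s/2-1})=o(1)$, using $s<1$. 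A union bound over the two failure events completes the argument.

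The only point requiring a little care --- and it is not really an obstacle --- is that the estimate $W_{\max}=o(M_n)$ must hold uniformly over the whole range of $\l$ permitted by~\eqref{valambda}, which is precisely where the strict inequality $s<1$ enters; in the boundary case $s=1$ (Corollary~\ref{cor1}) the corresponding statement is genuinely more delicate and there one must split according to the size of $c_0$.
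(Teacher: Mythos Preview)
Your proof is correct and follows exactly the paper's approach: the paper's own proof is the single line ``This follows from Lemmas~\ref{lm:Emin1} and~\ref{lm:conc1},'' and you have simply spelled out how those two lemmas combine. One small remark: in your final paragraph you attribute the simplicity of the $W_{\max}=o(M_n)$ estimate to the strict inequality $s<1$, but in fact your own computation shows the ratio is $o(n^{s/2-1})=o(1)$ for any $s\le 1$ (indeed any $s<2$); what actually makes Corollary~\ref{cor1x} easier than Corollary~\ref{cor1} is the restricted range~\eqref{valambda} for $\l$, not the value of $s$.
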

\begin{proof}
This follows from Lemmas \ref{lm:Emin1} and \ref{lm:conc1}.
\end{proof}
The results of Sections \ref{Dual}, \ref{discrete} and \ref{cycles} carry over. We now establish
\begin{lemma}
Assuming \eqref{c0range}, we have that w.h.p.,
\beq{maxaphi1}{
\max_\l\f_{map}(\l,c_0)\approx \frac{C_s^2n^{2-s}}{4c_0}.
}
\end{lemma}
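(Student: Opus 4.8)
The plan is to repeat the argument of Case~1 of Section~\ref{nopt} almost verbatim, with Lemmas~\ref{lm:Emin1} and \ref{lm:conc1} playing the roles of Lemmas~\ref{lm:Emin} and \ref{lm:conc}. By \eqref{eq:phi_m} we have $\f_{map}(\l,c_0)=\sum_{i\le n}\min_{j\ne i}\{W_{(i,j)}+\l C_{(i,j)}\}-\l c_0$, so Corollary~\ref{cor1x} gives, for any fixed $\l$ satisfying \eqref{valambda},
\[
\f_{map}(\l,c_0)=(1+o(1))C_s n^{1-s/2}\l^{1/2}-\l c_0
\]
with probability $1-O(n^{-99})$. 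The deterministic function $\psi(\l)=C_s n^{1-s/2}\l^{1/2}-\l c_0$ is strictly concave on $(0,\infty)$, and $\psi'(\l)=0$ has the unique solution $\l^*=\frac{C_s^2 n^{2-s}}{4c_0^2}$, at which $\psi(\l^*)=\frac{C_s^2 n^{2-s}}{4c_0}$.

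First I would check that $\l^*$ actually lies inside the admissible window \eqref{valambda}; this is exactly where the hypothesis \eqref{c0range} enters. Since $c_0\gg n^{1-s}\log n\ge n^{1-s}\log^{s/2}n$ we get $\l^*=\Theta(n^{2-s}c_0^{-2})\ll n^{s}\log^{-s}n=(n/\log n)^s$, and since $c_0\ll n/\log n\le n\log^{-s/2}n$ (using $s\le 1$) we get $\l^*\gg n^{-s}\log^{s}n=(\log n/n)^s$; in particular $\l^*\ge n^{-2}$, so \eqref{lowerl} holds. Next, as in Section~\ref{discrete}, discretise $[0,n\log n]$ into $n^{10}$ intervals of length $n^{-8}$, apply the displayed estimate together with a union bound over the grid points $\l_i$ (each exceptional event having probability $O(n^{-99})$), and use Lemma~\ref{cl1} to pass from $\max_{i}\f_{map}(\l_i,c_0)$ to $\max_\l\f_{map}(\l,c_0)$ at the cost of an additive $O(n^{-7})$. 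Because the grid point nearest $\l^*$ differs from $\l^*$ by $O(n^{-8})$ and $\psi$ varies smoothly there, $\max_i\psi(\l_i)=(1+o(1))\psi(\l^*)$; and the $O(n^{-7})$ discretisation error is negligible next to $\psi(\l^*)=\Theta(n^{2-s}/c_0)\gg n^{1-s}\log n\to\infty$. This yields $\max_\l\f_{map}(\l,c_0)\approx\frac{C_s^2 n^{2-s}}{4c_0}$ with probability $1-O(n^{-89})$, which is \eqref{maxaphi1}.

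The only point requiring care rather than computation is that Lemma~\ref{lm:Emin1}, unlike Lemma~\ref{lm:Emin}, supplies an asymptotic formula for $\E\min_i\{X_i+\l Y_i\}$ only on the single window \eqref{valambda} (in its proof the integrals $I_2,I_3$ are merely bounded below), so one cannot simply differentiate $\f_{map}$ across all $\l\ge 0$. Instead I would invoke concavity of $\f_{map}(\cdot,c_0)$: evaluating at grid points at macroscopic distance on either side of $\l^*$ (which still lie in \eqref{valambda}, thanks to the polylogarithmic slack just established) shows $\f_{map}$ is below $(1-c)\psi(\l^*)$ there for some constant $c>0$, and a concave function that decreases past such points cannot have its maximum outside the interval between them — this is precisely the ``one final point'' remark of Section~\ref{nopt}. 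Verifying that \eqref{c0range} is exactly the condition placing $\l^*$ safely inside the computable window, together with this boundary bookkeeping, is the main (and essentially the only) obstacle; everything else is a transcription of the $s=1$ argument.
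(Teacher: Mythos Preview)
Your proposal is correct and follows essentially the same route as the paper: identify $\f_{map}(\l,c_0)\approx C_s n^{1-s/2}\l^{1/2}-\l c_0$ via Lemma~\ref{lm:Emin1}/Corollary~\ref{cor1x}, maximise this concave function to obtain $\l^*=C_s^2 n^{2-s}/(4c_0^2)$, and check using \eqref{c0range} that $\l^*$ falls in the window \eqref{valambda}. Your write-up is in fact more careful than the paper's own proof, which leaves the discretisation and the concavity argument for ruling out $\l$ outside \eqref{valambda} implicit (deferring to the machinery of Sections~\ref{discrete} and~\ref{nopt}); your explicit handling of both points is an improvement.
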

\begin{proof}
We have
\beq{c0}{
{\blue \left(\frac{\log n}{n}\right)^s \ll \l \ll \left(\frac{n}{\log n}\right)^s}
\text{ and }  n^{1-s}\log n\ll c_0\ll \frac{n}{\log^{1/2}n}.
}
Now consider the function
\beq{fun1}{
C_s n^{1-s/2} \l^{1/2}-c_0\l.
}
This is a concave function of $\l$ and it will be maximised when
\beq{valla}{
C_s n^{1-s/2}\l^{-1/2}/2=c_0\text{ or }\l=\frac{C_s^2n^{2-s}}{4c_0^2},
}
and we note that this value of $\l$ is consistent with \eqref{c0}.

Substituting for $\l$ in \eqref{fun1} and simplifying, we see that \eqref{maxaphi1} holds.
\end{proof}
Equations \eqref{c0range} and  \eqref{maxaphi1} imply that w.h.p. $\max_\l\f_{map}(\l,c_0)\gg n^{1-s}$. So, we can as in Section \ref{nopt} argue with $c_0$ replaced by $c_0'=c_0-1$ and deduce from \eqref{Cf} that 
\[
W^*_{map}(c_0)\approx  \frac{C_s^2n^{2-s}}{4c_0}\ w.h.p.
\]
Proceeding as in Section \ref{From} we see that w.h.p.
\[
W^*_{arb}(c_0)=W^*_{map}(c_0)+O(\log n)
\]
and this completes the proof of Theorem \ref{thm:main-resx}.
\subsection{A coupling argument}\label{mgd}
We follow an argument from Janson \cite{Jan}. We will asssume that $W_e,C_e$ have the distribution function $F_w(t)=\Pr(X\leq t)$, of a random variable $X$, that satisfies $F(t)\approx at^{1/s},s\leq 1$ as $t\to 0$. The constant $a>0$ can be dealt with by scaling and so we assume that $a=1$ here. For a fixed edge and say, $W_e$, we consider random variables $W_e^<,W_e^>$ such that $W_e^<$ is distributed as $U^{s+\e_n}$ and $W_e^>$ is distributed as $U^{s-\e_n}$, where $\e_n=1/10\log n$. (This choice of $\e_n$ means that $n^{s+\e_n}=e^{1/10}n^s$.)  Then suppose that $X$ has the distribution $F^{-1}(U)$. We couple $X,W^<,W^>$ by generating a copy $U_e$ of $U$ and then putting $W_e^<=F_<^{-1}(U_e)=\log\bfrac{1}{1-u}^{s-\e_n}$. $F_>$ is defined similarly. The coupling ensures that $W_e^<\leq W_e\leq W_e^>$ as long as $W_e\leq\e_n$.

Given the above set up, it only remains to show that w.h.p. edges of length $W_e>\e_n$ or cost $C_e>\e_n$ are not needed for the upper bounds proved above. We can ignore the lower bounds, because they only increase if we exclude long edges. But this follows from Corollary \ref{cor1x}.

\section{Conclusion}
We have determined the asymptotic optimum value to Problem \eqref{prob} w.h.p. The proof is constructive in that we can w.h.p. get an asymptotically optimal solution \eqref{prob} by computing arborescence $A$ of the previous section. When weights and costs are uniform $[0,1]$, our theorem covers almost all of the possibilities for $c_0$, although there are some small gaps between the 3 cases. Our results for more general distributions have a more limited range and further research is needed to extend this {\blue part} of the paper. We have also considered more general classes of random variable and here we have a more limited range for $c_0$. 

The present result assumes that cost and weight are independent. It would be more reasonable to assume some positive correlation. This could be the subject of future research. One could also consider more than one constraint.

\end{document}